\newtheorem{theorem}{Theorem}[section]
\newtheorem{corollary}[theorem]{Corollary}
\newtheorem{definition}[theorem]{Definition}
\newtheorem{lemma}[theorem]{Lemma}
\newtheorem{proposition}[theorem]{Proposition}
\newtheorem{remark}[theorem]{Remark}
\newenvironment{proof}[1][Proof]{\noindent\textbf{#1.} }{\ \rule{0.5em}{0.5em}}
\begin{document}

\title{Solvability of Dirac type equations}
\author{Qingchun Ji and Ke Zhu}
\maketitle

\begin{abstract}
This paper develops a weighted $L^{2}$-method for the (half) Dirac equation.
For Dirac bundles over closed Riemann surfaces, we give a sufficient
condition for the solvability of the (half) Dirac equation in terms of a
curvature integral. Applying this to the Dolbeault-Dirac operator, we
establish an automatic transversality criteria for holomorphic curves in K%
\"{a}hler manifolds. On compact Riemannian manifolds, we give a new
perspective on some well-known results about the first eigenvalue of the
Dirac operator, and improve the estimates when the Dirac bundle has a $%
\mathbb{Z}_{2}$-grading. On Riemannian manifolds with cylindrical ends, we
obtain solvability in the $L^{2}$-space with suitable exponential weights
while allowing mild negativity of the curvature.
\end{abstract}

\tableofcontents


\section{Introduction}

In many geometric problems, it is important to determine the solvability of
the linear equation%
\begin{equation}
Du=f  \label{Dirac}
\end{equation}%
where $D$ is the Dirac operator on some Dirac bundle. For example, the
fundamental Dirac operator on spin manifolds (\cite{AS}), the
Dolbeault-Dirac operator in K\"{a}hler geometry, and the twisted Dirac
operator in the normal bundle of instantons (associative submanifolds) in $%
G_{2}$ manifolds (\cite{M}). In general, it is not easy to know when $\left( %
\ref{Dirac}\right) $ is solvable. For the Dirac operator on spin manifolds,
a sufficient condition was given by the positivity of the scalar curvature,
dating back to a theorem of Lichnerowicz. However, the positive scalar
curvature condition is not always necessary, as the Dirac operator on spin
manifolds has the remarkable conformal covariance property (\cite{Hi}), and
a conformal change of metric could make the scalar curvature negative
somewhere.

In this paper, starting with the Bochner formula, we establish weighted $%
L^{2}$-estimates and existence theorems for the Dirac equation, just as H%
\"{o}rmander's weighted $L^{2}$-method for the $\bar{\partial}$-equation (%
\cite{H1}, \cite{H2}). In applications of the $L^{2}$-method, it is very
important to construct good weight functions from geometric conditions (e.g. 
\cite{D}, \cite{S1}$\sim $\cite{S3}). Sometimes one can gain
\textquotedblleft extra positivity\textquotedblright\ in suitable weighted $%
L^{2}$-spaces to establish vanishing theorems.

Let $\lambda _{\mathbb{S}}$ be the function on $M$ defined in $\left( \ref%
{first-eigen-fcn}\right) $, which pointwisely is the first eigenvalue of
some curvature operator. For Dirac equations on $2$-dimensional Riemannian
manifolds, taking $n=2$ in Proposition \ref{Dirac-solve}, we have

\begin{theorem}
\label{RieSurfaceCase}Let $\mathbb{S}$ be a Dirac bundle over a $2$%
-dimensional Riemannian manifold $\left( M,g\right) $ and $D$ be the Dirac
operator. Suppose there exists a $C^{2}$ function $\varphi :M\rightarrow 
\mathbb{R}$ such that $\Delta \varphi +2\lambda _{\mathbb{S}}\geq 0$ on $M$.
If a section $f$ of $\mathbb{S}$ satisfies\ $\int_{M}\frac{\left\vert
f\right\vert ^{2}}{\Delta \varphi +2\lambda _{\mathbb{S}}}e^{-\varphi
}<\infty $, then there exists a section $u$ of $\mathbb{S}$ such that 
\begin{equation}
Du=f,\ \mathrm{and}\ \int_{M}\left\vert u\right\vert ^{2}e^{-\varphi }\leq
\int_{M}\frac{\left\vert f\right\vert ^{2}}{\Delta \varphi +2\lambda _{%
\mathbb{S}}}e^{-\varphi }.  \label{weight-L2-solvability}
\end{equation}
\end{theorem}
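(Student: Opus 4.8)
The plan is to carry out a H\"{o}rmander-type weighted $L^{2}$ argument for the first-order operator $D$ on the Hilbert space $H:=L^{2}(M,\mathbb{S},e^{-\varphi}dV)$ with inner product $\langle\cdot,\cdot\rangle_{\varphi}$; this is the $n=2$ case of the mechanism behind Proposition \ref{Dirac-solve}. Regard $D$ as a closed, densely defined operator on $H$ and let $D_{\varphi}^{\ast}$ be its Hilbert-space adjoint; since Clifford multiplication is skew-adjoint and $D$ is formally self-adjoint for the unweighted metric, a one-line computation gives $D_{\varphi}^{\ast}=D-c(\nabla\varphi)$. By the standard duality step (the linear functional $D_{\varphi}^{\ast}v\mapsto\langle v,f\rangle_{\varphi}$ is bounded, extend by Hahn--Banach and represent by Riesz, using $D^{\ast\ast}=D$), the conclusion $Du=f$ with the bound $\left(\ref{weight-L2-solvability}\right)$ follows once one has the a priori estimate
\[
\int_{M}(\Delta\varphi+2\lambda_{\mathbb{S}})\,|v|^{2}e^{-\varphi}\ \leq\ \|D_{\varphi}^{\ast}v\|_{\varphi}^{2}\qquad\text{for all }v\in\operatorname{Dom}(D_{\varphi}^{\ast}).
\]
Indeed, Cauchy--Schwarz then gives $|\langle f,v\rangle_{\varphi}|^{2}\leq\bigl(\int_{M}\tfrac{|f|^{2}}{\Delta\varphi+2\lambda_{\mathbb{S}}}e^{-\varphi}\bigr)\|D_{\varphi}^{\ast}v\|_{\varphi}^{2}$ (the finiteness hypothesis forces $f\equiv0$ where $\Delta\varphi+2\lambda_{\mathbb{S}}$ vanishes, so the division is legitimate), which is exactly what produces a solution $u$ with $\|u\|_{\varphi}^{2}\leq\int_{M}\tfrac{|f|^{2}}{\Delta\varphi+2\lambda_{\mathbb{S}}}e^{-\varphi}$.

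For the a priori estimate I reduce it to two integrated identities valid for compactly supported smooth $v$. Expanding $\|D_{\varphi}^{\ast}v\|_{\varphi}^{2}=\|Dv-c(\nabla\varphi)v\|_{\varphi}^{2}$, integrating by parts and using the commutator identity $[D,c(\nabla\varphi)]=-\Delta\varphi$ (which is just the contraction of the symmetric Hessian $\operatorname{Hess}\varphi$ against $c(e_{i})c(e_{j})$; here $\Delta\varphi:=\operatorname{tr}\operatorname{Hess}\varphi$), every first-order term involving $\nabla\varphi$ cancels and one is left with
\[
\|D_{\varphi}^{\ast}v\|_{\varphi}^{2}=\|Dv\|_{\varphi}^{2}+\int_{M}\Delta\varphi\,|v|^{2}e^{-\varphi},\qquad\|Dv\|_{\varphi}^{2}=\|\nabla v\|_{\varphi}^{2}+\int_{M}\langle\mathcal{K}_{\mathbb{S}}v,v\rangle\,e^{-\varphi},
\]
the second being the weighted version of the Lichnerowicz--Weitzenb\"{o}ck formula $D^{2}=\nabla^{\ast}\nabla+\mathcal{K}_{\mathbb{S}}$ (again the weight corrections cancel), with $\mathcal{K}_{\mathbb{S}}=\sum_{i<j}c(e_{i})c(e_{j})R^{\mathbb{S}}(e_{i},e_{j})$ and, by the definition of $\lambda_{\mathbb{S}}$ in $\left(\ref{first-eigen-fcn}\right)$, $\langle\mathcal{K}_{\mathbb{S}}v,v\rangle\geq\lambda_{\mathbb{S}}|v|^{2}$ pointwise.

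The place where $\dim M=2$ is used is the refined Kato (Friedrich twistor-operator) inequality. Writing $\nabla_{X}v=P_{X}v-\tfrac{1}{n}c(X)Dv$ with $P$ the twistor operator, the sum $\sum_{a}\langle P_{e_{a}}v,c(e_{a})Dv\rangle$ vanishes because $\sum_{a}c(e_{a})P_{e_{a}}v=0$, so $|\nabla v|^{2}=|Pv|^{2}+\tfrac{1}{n}|Dv|^{2}\geq\tfrac{1}{n}|Dv|^{2}$ pointwise; integrating against $e^{-\varphi}$ with $n=2$ gives $\|\nabla v\|_{\varphi}^{2}\geq\tfrac12\|Dv\|_{\varphi}^{2}$. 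Substituting this into the weighted Weitzenb\"{o}ck identity gives $\tfrac12\|Dv\|_{\varphi}^{2}\geq\int_{M}\langle\mathcal{K}_{\mathbb{S}}v,v\rangle e^{-\varphi}\geq\int_{M}\lambda_{\mathbb{S}}|v|^{2}e^{-\varphi}$, i.e. $\|Dv\|_{\varphi}^{2}\geq2\int_{M}\lambda_{\mathbb{S}}|v|^{2}e^{-\varphi}$, and together with the first identity this is exactly $\|D_{\varphi}^{\ast}v\|_{\varphi}^{2}\geq\int_{M}(\Delta\varphi+2\lambda_{\mathbb{S}})|v|^{2}e^{-\varphi}$.

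Finally the a priori estimate must be upgraded from compactly supported smooth sections to all of $\operatorname{Dom}(D_{\varphi}^{\ast})$; on a closed surface this is automatic, since elliptic regularity identifies $\operatorname{Dom}(D_{\varphi}^{\ast})$ with the Sobolev space $H^{1}(M,\mathbb{S})$ and smooth sections are dense there in the graph norm, and in a non-compact setting one inserts cutoff functions and absorbs the errors using the local boundedness of $e^{-\varphi}$ and of the curvature. I expect the only genuine friction to be (i) the bookkeeping that makes the $\nabla\varphi$-cross-terms in the two displayed identities cancel exactly, and (ii) this density/cutoff step; the refined Kato inequality is the one essential ingredient, and it is precisely what makes the constant sharp in dimension two.
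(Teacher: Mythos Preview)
Your overall strategy---H\"ormander duality reduced to the a priori estimate
\[
\|D_{\varphi}^{\ast}v\|_{\varphi}^{2}\ \geq\ \int_{M}(\Delta\varphi+2\lambda_{\mathbb{S}})\,|v|^{2}e^{-\varphi}
\]
for compactly supported smooth $v$---is exactly the paper's, and the target inequality is the paper's estimate \eqref{weighted-L2-est2}. The refined Kato inequality $|\nabla v|^{2}\geq\tfrac{1}{n}|Dv|^{2}$ is also the right ingredient. But your derivation of the a priori estimate contains a genuine error: \emph{both} displayed identities are false as stated. The commutator $[D,c(\nabla\varphi)]$ is \emph{not} the scalar $-\Delta\varphi$; computing $D(c(\nabla\varphi)v)-c(\nabla\varphi)Dv$ one finds, besides the Hessian contraction $-\Delta\varphi\cdot v$, the first-order piece $\sum_{i}[c(e_{i}),c(\nabla\varphi)]\nabla_{e_{i}}v$, so the ``every first-order term cancels'' claim fails. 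Likewise the weighted Weitzenb\"ock identity does \emph{not} hold: $D_{\varphi}^{\ast}D=D^{2}-c(\nabla\varphi)D$ while $\nabla_{\varphi}^{\ast}\nabla=\nabla^{\ast}\nabla-\nabla_{\nabla\varphi}$, and $c(\nabla\varphi)D\neq\nabla_{\nabla\varphi}$. (A concrete counterexample on flat $\mathbb{R}^{2}$: take $\varphi=x^{2}+y^{2}$ and $v=e^{iy}\chi(x)\eta(y)\psi_{0}$ with $\chi$ supported in $[1,2]$; the discrepancy integrates to a nonzero number.)

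What saves you is a cancellation you did not see: if $E$ denotes the common error term, the true versions read $\|D_{\varphi}^{\ast}v\|_{\varphi}^{2}=\|Dv\|_{\varphi}^{2}+\int\Delta\varphi\,|v|^{2}e^{-\varphi}-2E$ and $\|Dv\|_{\varphi}^{2}=\|\nabla v\|_{\varphi}^{2}+\int\langle\mathfrak{R}v,v\rangle e^{-\varphi}+E$, and combining them with Kato in exactly your order makes the $E$'s drop out. The paper avoids this trap altogether by the substitution $\sigma=e^{-\varphi/2}s$: then $\|D_{\varphi}^{\ast}s\|_{\varphi}^{2}=\int|D\sigma-\tfrac12 c(\nabla\varphi)\sigma|^{2}$, and one applies the \emph{unweighted} Bochner identity \eqref{L2-identity} to $\sigma$, tracking the cross terms explicitly (equations \eqref{one}--\eqref{Five}); for $n=2$ the $|\nabla\varphi|^{2}$ and cross terms visibly cancel after inserting $|\nabla s|^{2}\geq\tfrac12|Ds|^{2}$ and $Ds=D_{\varphi}^{\ast}s+c(\nabla\varphi)s$. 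For the density step, the paper does not argue directly on $\mathrm{Dom}(D_{\varphi}^{\ast})$ over $M$; instead it works on relatively compact $\Omega\Subset M$ (where smooth compactly supported sections are dense in the graph norm of the minimal extension), solves there with the uniform bound, and extracts a weak limit over an exhaustion $\Omega_{\nu}\nearrow M$.
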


\bigskip Our theorem \ref{RieSurfaceCase} leads to the following solvability
criterion of the half Dirac equation on $\mathbb{Z}_{2}$-graded Dirac
bundles (see Section \ref{Dirac-bdl} for definitions).

\begin{corollary}
\label{curvature-integral-D+}Let $\mathbb{S}$ be a $\mathbb{Z}_{2}$-graded
Dirac bundle over a closed $2$-dimensional Riemannian manifold $M$ and $%
D^{\pm }$ be the half Dirac operators, then 
\begin{equation}
\lambda _{\mathrm{min}}(D^{\pm }D^{\mp })\geq \frac{2}{\mathrm{Vol}(M)}%
\int_{M}\lambda _{\mathbb{S}^{\mp }},  \label{eigen-est-D+-surface}
\end{equation}%
where $\lambda _{\mathrm{min}}(D^{\pm }D^{\mp })$ is the first eigenvalue of 
$D^{\pm }D^{\mp }$. Consequently, if 
\begin{equation}
\int_{M}\lambda _{\mathbb{S}^{\mp }}>0,  \label{posi-curvature-integral}
\end{equation}%
then for any $f\in L^{2}(M,\mathbb{S}^{\mp })$, there exists a section $u\in
L^{2}(M,\mathbb{S}^{\pm })$ such that $D^{\pm }u=f$.
\end{corollary}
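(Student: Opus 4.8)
The plan is to apply Theorem \ref{RieSurfaceCase} on each graded summand separately, with a weight $\varphi$ chosen to equidistribute the curvature function $\lambda_{\mathbb{S}^{\mp}}$ over $M$.

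First I would unwind the $\mathbb{Z}_{2}$-graded structure. Writing $\mathbb{S}=\mathbb{S}^{+}\oplus\mathbb{S}^{-}$, the Dirac operator interchanges the two summands and is formally self-adjoint, so $(D^{\pm})^{\ast}=D^{\mp}$ for the unweighted $L^{2}$ pairing and $D^{\pm}D^{\mp}=(D^{\mp})^{\ast}D^{\mp}\geq 0$ acts on $\mathbb{S}^{\mp}$ with $\ker(D^{\pm}D^{\mp})=\ker D^{\mp}$; hence $\lambda_{\mathrm{min}}(D^{\pm}D^{\mp})>0\iff\ker D^{\mp}=0$, and in that case $D^{\pm}$ is onto because it is elliptic and so has closed range. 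The second observation is that the Weitzenb\"{o}ck curvature operator behind the definition \eqref{first-eigen-fcn} of $\lambda_{\mathbb{S}}$ preserves the grading, so its fibrewise lowest eigenvalue restricted to $\mathbb{S}^{\mp}$ is $\lambda_{\mathbb{S}^{\mp}}\;(\geq\lambda_{\mathbb{S}})$. Therefore the Bochner/weighted-$L^{2}$ argument proving Theorem \ref{RieSurfaceCase}, carried out with all sections taken in $\mathbb{S}^{\mp}$ and $D$ replaced by $D^{\mp}|_{\mathbb{S}^{\mp}}$, goes through with $\lambda_{\mathbb{S}}$ replaced by the larger $\lambda_{\mathbb{S}^{\mp}}$. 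This gives a \textquotedblleft graded\textquotedblright\ Theorem \ref{RieSurfaceCase}: if $\varphi\in C^{2}(M)$ satisfies $\Delta\varphi+2\lambda_{\mathbb{S}^{\mp}}\geq 0$ and $f\in\Gamma(\mathbb{S}^{\mp})$ satisfies $\int_{M}\frac{|f|^{2}}{\Delta\varphi+2\lambda_{\mathbb{S}^{\mp}}}e^{-\varphi}<\infty$, then $D^{\pm}u=f$ is solvable with the corresponding weighted $L^{2}$ bound. (Solving $Du=f$ for $f\in\Gamma(\mathbb{S}^{\mp})$ by Theorem \ref{RieSurfaceCase} already gives $D^{\pm}u^{\pm}=f$ for the $\mathbb{S}^{\pm}$-component of $u$; the only point of the grading is the sharpening $\lambda_{\mathbb{S}}\rightsquigarrow\lambda_{\mathbb{S}^{\mp}}$, which is exactly what makes \eqref{posi-curvature-integral} — rather than $\int_{M}\lambda_{\mathbb{S}}>0$ — sufficient.)

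Next I would choose the weight: on the closed surface $M$ set $K:=\frac{2}{\mathrm{Vol}(M)}\int_{M}\lambda_{\mathbb{S}^{\mp}}$ and solve $\Delta\varphi=K-2\lambda_{\mathbb{S}^{\mp}}$. The right-hand side has zero mean, so a $C^{2}$ solution $\varphi$ exists by standard elliptic theory on a closed manifold ($\lambda_{\mathbb{S}^{\mp}}$ is Lipschitz, being the minimal eigenvalue of a smooth field of symmetric endomorphisms), and for it $\Delta\varphi+2\lambda_{\mathbb{S}^{\mp}}\equiv K$ on $M$. If $\int_{M}\lambda_{\mathbb{S}^{\mp}}>0$ then $K>0$, so every $f\in L^{2}(M,\mathbb{S}^{\mp})$ satisfies $\int_{M}\frac{|f|^{2}}{\Delta\varphi+2\lambda_{\mathbb{S}^{\mp}}}e^{-\varphi}=\frac{1}{K}\int_{M}|f|^{2}e^{-\varphi}<\infty$ (as $M$ is compact and $\varphi$ bounded), and the graded Theorem \ref{RieSurfaceCase} produces $u\in L^{2}(M,\mathbb{S}^{\pm})$ with $D^{\pm}u=f$; this settles the last assertion of the Corollary. (Equivalently, with this $\varphi$ the coercivity estimate behind the graded Theorem \ref{RieSurfaceCase} reads $\Vert D^{\mp}w\Vert_{\varphi}^{2}\geq K\Vert w\Vert_{\varphi}^{2}$ in the weighted $L^{2}$-norm $\Vert\cdot\Vert_{\varphi}$, which forces $\ker D^{\mp}=0$.)

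The harder part is the sharp eigenvalue bound \eqref{eigen-est-D+-surface}, which is vacuous when $K\leq 0$ but, for $K>0$, is not immediate from the weighted coercivity above: that estimate gives a spectral gap $\geq K$ only in the $\varphi$-weighted space, and $\max\varphi-\min\varphi$ is uncontrolled. The route I would take is through the conformal covariance of the Dirac operator in dimension two: the $\varphi$-weighted norms are exactly the $L^{2}$-norms for the conformally rescaled metric $\widetilde{g}=e^{2\rho}g$ with $\rho$ a fixed multiple of $\varphi$, under which $D^{\mp}$ is conformally covariant and the curvature integral $\int_{M}\lambda_{\mathbb{S}^{\mp}}\,dV$ is unchanged — its scalar-curvature part by Gauss--Bonnet, its twisting part being a Chern--Weil integral. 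Hence $K$ is also the equidistributed value for $\widetilde{g}$, the weighted estimate turns into $\lambda_{\mathrm{min}}\big(\widetilde{D}^{\pm}\widetilde{D}^{\mp}\big)\geq K$ for the Dirac operator of $\widetilde{g}$, and identifying the two Dirac operators recovers \eqref{eigen-est-D+-surface}. Getting this conformal transfer right — the spinor-bundle identification, the weight exponents, and the invariance of the curvature integral — is where I expect the real work to lie; the rest is the grading bookkeeping, solvability of the Poisson equation, and compactness of $M$.
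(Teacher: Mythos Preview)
Your outline for the solvability assertion is exactly the paper's: pass to the graded Bochner identity (the paper's Proposition \ref{weighted-egen-pm}), solve the Poisson equation $\Delta\varphi = K - 2\lambda_{\mathbb{S}^{\mp}}$ on the closed surface, and invoke the graded analogue of Theorem \ref{RieSurfaceCase}. That part matches the paper verbatim.

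The divergence is in how you extract the eigenvalue inequality \eqref{eigen-est-D+-surface} from the weighted coercivity. You correctly flag that the weighted bound lives in $L^{2}_{\varphi}$ while the spectrum you want is that of $D^{\pm}D^{\mp}$ on $L^{2}$, and you propose to bridge the two via conformal covariance of the Dirac operator in dimension $2$. This is heavier machinery than the paper uses, and for a \emph{general} Dirac bundle it is not available: conformal covariance is a feature of the spinor bundle (Hitchin \cite{Hi}), not of arbitrary Clifford modules, and your anticipated invariance of $\int_{M}\lambda_{\mathbb{S}^{\mp}}$ under conformal change would need separate justification beyond the Gauss--Bonnet/Chern--Weil cases you mention. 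The paper sidesteps all of this with an elementary conjugation argument (see the parenthetical at the end of the proof of Corollary \ref{eigen-by-curvature-integral} and the Remark following it): on a compact $M$ multiplication by $e^{\varphi/2}$ is a bounded isomorphism, so $D^{2}$ and $\widetilde{D}^{2}:=e^{-\varphi/2}D^{2}e^{\varphi/2}$ are similar and share the same spectrum; concretely, if $Dv=\lambda v$ then with $s=e^{\varphi}v$ one has $D^{\ast}_{\varphi}s=\lambda s$, and the weighted estimate $\Vert D^{\ast}_{\varphi}s\Vert_{\varphi}^{2}\geq K\Vert s\Vert_{\varphi}^{2}$ reads $\lambda^{2}\geq K$ directly. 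The graded statement then follows by the paper's blanket prescription in Section \ref{Z2-Dirac}: run the same proof with $D$ replaced by $D^{\pm}$ and $\lambda_{\mathbb{S}}$ by $\lambda_{\mathbb{S}^{\mp}}$. So the ``real work'' you anticipate---spinor bundle identifications, weight exponents, conformal invariance of curvature integrals---never arises in the paper's route.
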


Applying Corollary \ref{curvature-integral-D+} to the Dolbeault-Dirac
operator, we obtain

\begin{corollary}
\label{curvature-integral-dbar} Let $E$ be a holomorphic vector bundle over
a closed Riemann surface $M$ and $\overline{\partial }:E\rightarrow \wedge
^{0,1}(E)$ be the Cauchy-Riemann operator. For any Hermitian metric on $E$,
we have the following estimates: 
\begin{eqnarray}
\lambda _{\mathrm{min}}(\overline{\partial }^{\ast }\overline{\partial })
&\geq &\frac{-1}{\mathrm{Vol}(M)}\int_{M}\Theta _{E},  \label{eigen spin^c1}
\\
\lambda _{\mathrm{min}}(\overline{\partial }\overline{\partial }^{\ast })
&\geq &\frac{1}{\mathrm{Vol}(M)}\left( \int_{M}\theta _{E}+2\pi \chi
(M)\right) .  \label{eigen spin^c2}
\end{eqnarray}%
In particular, if $E$ is a line bundle, we have 
\begin{equation}
\lambda _{\mathrm{min}}(\overline{\partial }^{\ast }\overline{\partial }%
)\geq \frac{-2\pi c_{1}(E)}{\mathrm{Vol}(M)},\ \ \lambda _{\mathrm{min}}(%
\overline{\partial }\overline{\partial }^{\ast })\geq \frac{2\pi \left(
c_{1}(E)+\chi (M)\right) }{\mathrm{Vol}(M)},  \label{eigen-line-bdl}
\end{equation}%
where $\theta _{E}$ and $\Theta _{E}$ are defined by $\left( \ref{thetaE}%
\right) $ and $\left( \ref{ThetaE}\right) $ respectively, $c_{1}(E)$ is the
first Chern number of $E$, and $\chi (M)$ is the Euler number of $M$.
\end{corollary}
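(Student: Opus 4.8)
The strategy is to realize the Cauchy-Riemann operator $\overline{\partial}$ as a half Dirac operator on a suitable $\mathbb{Z}_2$-graded Dirac bundle and then quote Corollary \ref{curvature-integral-D+}. The natural candidate is the Dolbeault-Dirac bundle $\mathbb{S} = \wedge^{0,\bullet}(E) = \wedge^{0,0}(E) \oplus \wedge^{0,1}(E)$ over the Riemann surface $M$, with $\mathbb{S}^+ = \wedge^{0,0}(E) = E$ and $\mathbb{S}^- = \wedge^{0,1}(E)$, where the Clifford multiplication is built from $\overline{\partial}$ and $\overline{\partial}^*$ in the standard way so that $D = \sqrt{2}(\overline{\partial} + \overline{\partial}^*)$ (up to the usual normalization constant, which I must track carefully since it affects the factor of $2$ versus $\pi$ in the final bounds). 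Then $D^+ = \sqrt{2}\,\overline{\partial}: \wedge^{0,0}(E) \to \wedge^{0,1}(E)$ and $D^- = \sqrt{2}\,\overline{\partial}^*$, so $D^-D^+ = 2\,\overline{\partial}^*\overline{\partial}$ and $D^+D^- = 2\,\overline{\partial}\,\overline{\partial}^*$; applying \eqref{eigen-est-D+-surface} gives $\lambda_{\mathrm{min}}(\overline{\partial}^*\overline{\partial}) \geq \frac{1}{\mathrm{Vol}(M)}\int_M \lambda_{\mathbb{S}^+}$ and similarly for $\overline{\partial}\,\overline{\partial}^*$ with $\lambda_{\mathbb{S}^-}$.

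The substance of the proof is then a pointwise identification of the curvature quantities $\lambda_{\mathbb{S}^\pm}$ — the first eigenvalue of the relevant Clifford-curvature endomorphism restricted to $\mathbb{S}^\pm$ — with the explicit expressions $-\Theta_E$ and $\theta_E$. For this I would invoke the Bochner–Kodaira–Weitzenböck formula relating the Dirac Laplacian $D^2$ on $\wedge^{0,\bullet}(E)$ to the rough Laplacian plus a curvature term, and compare it with the Bochner–Kodaira identity for $\overline{\partial}$, whose zeroth-order term on $\wedge^{0,0}(E)$ involves the curvature $\Theta_E$ of $E$ and on $\wedge^{0,1}(E)$ involves $\Theta_E$ together with the curvature of the anticanonical bundle $\wedge^{0,1} = \overline{K_M}$, which over a Riemann surface integrates to $2\pi\chi(M)$ by Gauss–Bonnet. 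Since $\dim_{\mathbb C} M = 1$, the relevant bundles $\wedge^{0,0}(E)$ and $\wedge^{0,1}(E)$ have the same rank as $E$, the curvature endomorphisms are (essentially) multiplication operators twisted by $\mathrm{End}(E)$, and $\lambda_{\mathbb S^\pm}$ reduces to the least eigenvalue of the corresponding Hermitian endomorphism of $E$; integrating and using that $\int_M$ of the trace gives the Chern/Euler numbers yields the line-bundle specializations \eqref{eigen-line-bdl} immediately, since then $E$ is a line bundle, $\lambda_{\mathbb S^+} = -\Theta_E$ is a scalar function with $\int_M \Theta_E = 2\pi c_1(E)$, and likewise for $\mathbb S^-$.

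The main obstacle I anticipate is bookkeeping rather than conceptual: getting the normalization constants and signs exactly right in the Clifford-module structure so that the Dirac-bundle curvature operator $\mathfrak{R}$ appearing in the definition of $\lambda_{\mathbb{S}}$ (from \eqref{first-eigen-fcn}, whose precise form lives in Section \ref{Dirac-bdl}) matches the curvature term in the Bochner–Kodaira formula for $\overline{\partial}$ — in particular distinguishing the contributions of $\mathrm{Ric}$/scalar curvature of $M$ from the bundle curvature $\Theta_E$, and correctly attributing the $\wedge^{0,1}$-factor's curvature so that the $2\pi\chi(M)$ term enters \eqref{eigen spin^c2} with the right coefficient. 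A secondary subtlety is checking that the Dolbeault-Dirac bundle genuinely satisfies all the axioms of a $\mathbb{Z}_2$-graded Dirac bundle (compatible connection, Clifford multiplication skew-adjoint and parallel, grading anticommuting with Clifford action) so that Corollary \ref{curvature-integral-D+} applies verbatim; this is classical (Lawson–Michelsohn) but should be stated. Once the curvature identification $\lambda_{\mathbb{S}^+} = -\Theta_E$ and $\lambda_{\mathbb{S}^-} = \theta_E + (\text{Gauss curvature term})$ is in hand, inequalities \eqref{eigen spin^c1}–\eqref{eigen spin^c2} and \eqref{eigen-line-bdl} follow by direct substitution into \eqref{eigen-est-D+-surface} and Gauss–Bonnet.
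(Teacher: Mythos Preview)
Your proposal is correct and follows essentially the same route as the paper: it sets up the Dolbeault--Dirac bundle $\mathbb{S}^+ = E$, $\mathbb{S}^- = \wedge^{0,1}(E)$ with $D^{\pm} = \sqrt{2}\,\overline{\partial}^{(\ast)}$, identifies $\lambda_{\mathbb{S}^+} = -\Theta_E$ and $\lambda_{\mathbb{S}^-} = \theta_E + K$ by comparing the $\overline{\partial}$-Bochner formula with the Dirac Bochner formula (this is exactly the content and method of Lemma~\ref{curve. term Spinc}), and then applies Corollary~\ref{curvature-integral-D+} together with Gauss--Bonnet. Your anticipated bookkeeping concerns about the factor $\sqrt{2}$ and the Gauss-curvature contribution are precisely the points the paper handles, and you have them right.
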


\bigskip The above Corollary generalizes B\"{a}r's first eigenvalue estimate
(\cite{B}) for the classic Dirac operator on closed Riemann surfaces, where $%
E=K_{M}^{\frac{1}{2}}$ and the Riemann surface is of genus $0$.

Corollary \ref{curvature-integral-dbar} has a potential extension to real
linear Cauchy-Riemann operators on Riemann surfaces, which in turn has
applications to the \textquotedblleft automatic
transversality\textquotedblright\ criteria of $J$-holomorphic curves in
almost complex manifolds $X$. We recall the definitions below.

Given a Riemann surface $\left( M,j\right) $, where $j$ is its complex
structure, a $J$\emph{-holomorphic curve} $u:\left( M,j\right) \rightarrow
\left( X,J\right) $ is a solution of the nonlinear Cauchy-Riemann equation $%
\overline{\partial }_{J}u=0$, where $\overline{\partial }_{J}\left( u\right)
=\frac{1}{2}\left( du+J\circ du\circ j\right) $. For a $J$-holomorphic curve 
$u$, the \emph{linearized operator} 
\begin{equation*}
D_{u}:W^{1,p}(M,u^{\ast }TX)\rightarrow L^{p}(M,\wedge ^{0,1}(u^{\ast }TX))%
\text{ }
\end{equation*}%
$(p\geq 2)$ of the nonlinear Cauchy-Riemann operator $\overline{\partial }%
_{J}$ is of the form%
\begin{equation*}
D_{u}=\overline{\partial }+A,
\end{equation*}%
where $\overline{\partial }$ is the Cauchy-Riemann operator on the complex
vector bundle $u^{\ast }TX$ and $A:W^{1,p}(M,u^{\ast }TX)\rightarrow
L^{p}(M,\wedge ^{0,1}(u^{\ast }TX))$ \ is an anti-complex linear
homomorphism determined by the Nijenhuis tensor of $J$. $A=0$ if $J$ is
integrable. $\overline{\partial }+A$ is a \emph{real linear Cauchy-Riemann
operator} (c.f. Appendix C \cite{MS}).

\begin{definition}
\label{Fred-reg}A $J$-holomorphic curve $u$ is called \emph{transversal} (or 
\emph{Fredholm regular}), if $D_{u}$ is surjective. A $J$-holomorphic curve $%
u$ has automatic transversality, if $u$ is transversal regardless of whether 
$J$ is generic or not.
\end{definition}

Note for symplectic manifolds $\left( X,\omega \right) $ of dimension $4$
with compatible almost complex structures $J$, for immersed $J$-holomorphic
curves $u$, taking $E=N_{u}$ (the normal bundle of $u$ in $X$), the estimate 
$\left( \ref{eigen-line-bdl}\right) $ is related to the well-known Chern
number condition $c_{1}(E)>-\chi (M)$ to ensure automatic transversality,
which was first mentioned in \cite{Gr} and later established in \cite{HLS}, 
\cite{W}, \cite{MS}, with nice applications to the moduli space theory of $J$%
-holomorphic curves.\ 

If the almost complex structure $J$ is integrable, our Corollary \ref%
{curvature-integral-dbar} provides a criterion for automatic transversality
of holomorphic curves in K\"{a}hler manifolds with real dimensional $\geq 4$.

\begin{corollary}
\label{auto-trans} Let $u:\left( M,j\right) \rightarrow \left( X,\omega
,J\right) $ be a holomorphic curve in a K\"{a}hler manifold $X$ equipped
with complex structure $J$ and K\"{a}hler form $\omega $. Let $E=u^{\ast }TX$
be the vector bundle over $M$ with induced holomorphic structure and metric.
Then $u$ has automatic transversality, if 
\begin{equation}
\int_{M}\theta _{E}>-2\pi \chi (M),  \label{curv-intg-trans}
\end{equation}%
where $\theta _{E}$ is defined in $\left( \ref{thetaE}\right) $.
\end{corollary}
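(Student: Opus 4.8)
The plan is to reduce automatic transversality of $u$ to the surjectivity statement in Corollary \ref{curvature-integral-dbar}. Since $J$ is integrable, the Nijenhuis tensor vanishes, so the zeroth-order term $A$ in the linearization disappears and $D_u = \overline{\partial}$ is the genuine Cauchy--Riemann operator on the holomorphic vector bundle $E = u^{\ast}TX$, equipped with the holomorphic structure and Hermitian metric pulled back from $X$. Thus $u$ is Fredholm regular exactly when $\overline{\partial}\colon W^{1,p}(M,E)\to L^{p}(M,\wedge^{0,1}(E))$ is surjective. First I would record that it suffices to prove surjectivity in the $L^2$-setting: by elliptic regularity for $\overline{\partial}$ on the closed Riemann surface $M$, any $f\in L^p$ ($p\ge 2$) lies in $L^2$, and an $L^2$-solution $u$ with $\overline{\partial}u = f$ automatically gains one derivative and, by bootstrapping, lies in $W^{1,p}$; so $L^2$-surjectivity implies $W^{1,p}$-surjectivity.

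Next I would invoke Corollary \ref{curvature-integral-dbar}: the hypothesis $\int_M \theta_E > -2\pi\chi(M)$ is precisely $\int_M \theta_E + 2\pi\chi(M) > 0$, which by $\left(\ref{eigen spin^c2}\right)$ forces
\begin{equation*}
\lambda_{\mathrm{min}}(\overline{\partial}\,\overline{\partial}^{\ast}) \geq \frac{1}{\mathrm{Vol}(M)}\left(\int_M \theta_E + 2\pi\chi(M)\right) > 0.
\end{equation*}
Hence $\overline{\partial}\,\overline{\partial}^{\ast}$ is a positive operator on $L^2(M,\wedge^{0,1}(E))$, so in particular it is injective, and being self-adjoint Fredholm with trivial kernel it is surjective onto $L^2(M,\wedge^{0,1}(E))$. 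Therefore for any $f\in L^2(M,\wedge^{0,1}(E))$ there is $v$ with $\overline{\partial}\,\overline{\partial}^{\ast}v = f$, and $u := \overline{\partial}^{\ast}v$ solves $\overline{\partial}u = f$. (Equivalently, one quotes the last sentence of Corollary \ref{curvature-integral-dbar} directly, applied to $\mathbb{S}^{-}=\wedge^{0,1}(E)$ with $D^{-}=\overline{\partial}^{\ast}$, giving solvability of $D^{+}u = f$, i.e. $\overline{\partial}u=f$, for every $f$.) Combined with the regularity remark above, $D_u$ is surjective on the $W^{1,p}$--$L^p$ level.

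Finally I would note that the argument used nothing about whether $J$ is generic — the estimate is purely in terms of the fixed geometric data $(E,\text{metric})$ — so $u$ is transversal in the sense of Definition \ref{Fred-reg}, i.e. it has automatic transversality. The main point requiring care, and the only genuinely non-formal step, is the passage between the $L^p$/$W^{1,p}$ formulation of $D_u$ appearing in the definition of Fredholm regularity and the $L^2$ formulation in which Corollary \ref{curvature-integral-dbar} is stated; this is handled by standard elliptic estimates for first-order elliptic operators on a closed surface, together with the Sobolev embeddings in dimension $2$, and I would spell out just enough of this to make the reduction rigorous. Everything else is a direct quotation of the already-established eigenvalue bound $\left(\ref{eigen spin^c2}\right)$.
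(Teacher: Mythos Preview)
Your proposal is correct and follows exactly the route the paper intends: since $J$ is integrable the linearization $D_u$ reduces to the genuine $\overline{\partial}$ on $E=u^*TX$, and the hypothesis $\int_M\theta_E>-2\pi\chi(M)$ feeds directly into the eigenvalue bound~(\ref{eigen spin^c2}) of Corollary~\ref{curvature-integral-dbar} to give $\lambda_{\min}(\overline{\partial}\,\overline{\partial}^*)>0$ and hence surjectivity of $\overline{\partial}$. The paper does not spell out a separate proof of Corollary~\ref{auto-trans}, treating it as an immediate consequence of Corollary~\ref{curvature-integral-dbar}; your write-up supplies precisely the missing details (the $L^2$--$W^{1,p}$ reduction via elliptic regularity on a closed surface), with the only slip being that the parenthetical ``last sentence of Corollary~\ref{curvature-integral-dbar}'' should refer to Corollary~\ref{curvature-integral-D+}, which is where the solvability statement actually appears.
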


We leave the general case to our future work \cite{JZ}.

\medskip

For spinor bundles over closed spin manifolds of dimension $\geq 3$, Hijazi
obtained (\cite{H}) a lower bound of the spectrum of the fundamental Dirac
operator in terms of the first eigenvalue of the Yamabe operator, with the
help of the conformal covariance of the Dirac operator and the
transformation law of the scalar curvature under a conformal change of the
given metric. By introducing a new connection, B\"{a}r (\cite{B})
generalized this result, using a delicate technique of completing square, to
any Dirac bundle over a closed Riemannian manifold.

As an application of our weighted $L^{2}$-estimates, we construct weight
functions by solving certain partial differential equations and give a new
proof of B\"{a}r's results on the first eigenvalue of the Dirac operator
(Theorem \ref{first-eigen-est}).

For Dirac bundles with $\mathbb{Z}_{2}$-gradings, we establish similar
results for the (half) Dirac operator $D^{\pm }$ in Section \ref{Z2-Dirac},
as this is important for many geometric applications. Using the $\mathbb{Z}%
_{2}$-grading (which always exists on Dirac bundles over even dimensional
manifolds) we improve B\"{a}r's (\cite{B} Theorem 3) first eigenvalue
estimates of $D$ on even dimensional manifolds. \bigskip \medskip

In gauge theory, it occurs often that $M$ are Riemannian manifolds with 
\emph{cylindrical ends} (Definition \ref{cylindrical}), and exponential
weights on the ends are frequently used to set up the moduli spaces (e.g. 
\cite{Don}, \cite{T}). On such manifolds we have the following existence
theorem without assuming $\lambda _{\mathbb{S}}$ is positive everywhere.

\begin{theorem}
\label{cylindrical-solvability}Suppose $\mathbb{S}$ is a Dirac bundle over a
Riemannian manifold $\left( M,g\right) $ with cylindrical ends, $\dim M\geq
3 $. Suppose for some compact subset $K$, $M\backslash K$ is contained in
the cylindrical ends, and there exist a constant $\alpha >0$ such that 
\begin{equation}
\lambda _{\mathbb{S}}\geq \alpha \text{ on }M\backslash K.
\label{ls-condition-cyl-mfd}
\end{equation}%
Then there exists a constant $\beta >0$ with the following significance:
when 
\begin{equation}
\lambda _{\mathbb{S}}\geq -\beta \text{ on }K,
\label{mild-negative-curvature}
\end{equation}%
we have

\begin{enumerate}
\item for any section $f\in L_{\delta }^{2}(M,\mathbb{S})$, there exists a
section $u\in W_{\delta }^{1,2}(M,\mathbb{S})$ such that 
\begin{equation}
Du=f\ \mathrm{and}\ \left\Vert u\right\Vert _{W_{\delta }^{1,2}(M,\mathbb{S}%
)}\leq C\left\Vert f\right\Vert _{L_{\delta }^{2}(M,\mathbb{S})};
\label{weighted-sol}
\end{equation}

\item for any section $f\in L^{p}\left( M,\mathbb{S}\right) $ ($p>1$), there
exists a section $u\in W^{1,p}\left( M,\mathbb{S}\right) $ such that 
\begin{equation}
Du=f\ \mathrm{and}\ \left\Vert u\right\Vert _{W^{1,p}\left( M,\mathbb{S}%
\right) }\leq C_{p}\left\Vert f\right\Vert _{L^{p}\left( M,\mathbb{S}\right)
}.  \label{weighted-Lp-cyl}
\end{equation}
\end{enumerate}

Here $\delta _{1},\cdots ,\delta _{m}\geq 0$ are sufficiently small, $\delta
=\left( \delta _{1},\cdots ,\delta _{m}\right) $ is the exponential weight
(defined in $\left( \ref{exp-weight}\right) $) for the cylindrical ends, and
the constants $C$ and $C_{p}$ are independent on $f$.
\end{theorem}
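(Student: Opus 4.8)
My plan is to combine a Fredholm/functional-analytic argument on the weighted Sobolev spaces with the weighted $L^2$-estimate machinery developed earlier (the $n$-dimensional analogue of Theorem \ref{RieSurfaceCase}, i.e. Proposition \ref{Dirac-solve}), using the exponential weight on the cylindrical ends as a substitute for global positivity of $\lambda_{\mathbb{S}}$. First I would set up the weighted spaces: on each cylindrical end $[0,\infty)\times N_i$ with cylinder coordinate $t$, the weight is $e^{\delta_i t}$ near infinity (this is $\left(\ref{exp-weight}\right)$), and I would choose a smooth positive weight function $w$ on $M$ that equals $e^{\delta_i t}$ on the $i$-th end and is bounded away from $0$ and $\infty$ on $K$. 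Conjugating $D$ by $w^{1/2}$ (or working with the measure $w^{-1}\,dV$) converts the weighted problem into an unweighted-looking problem for a perturbed operator $D_\delta = w^{1/2} D w^{-1/2}$, whose zeroth-order part differs from $D$ by a term supported on the ends of size $O(\delta)$; the key point is that the relevant Weitzenböck/Bochner curvature term picks up a contribution $\pm\tfrac12\delta_i$ from the weight on the $i$-th end, so that on $M\setminus K$ the effective curvature is $\lambda_{\mathbb{S}} - O(\delta) \geq \alpha - O(\delta) > 0$ once the $\delta_i$ are small.

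Next I would establish the basic weighted $L^2$-estimate. On $M\setminus K$ the effective first curvature eigenvalue is bounded below by a positive constant $\alpha' = \alpha/2$ (say), so the Bochner argument of Proposition \ref{Dirac-solve} gives $\|Du\|_{L^2_\delta}^2 \geq \int_{M\setminus K}\alpha' |u|^2 e^{-\varphi} + (\text{Bochner terms on }K)$ for $u$ supported away from a fixed compact set; on $K$ the curvature term is $\geq -\beta$, so the total is $\geq \alpha'\|u\|_{L^2_\delta}^2 - (\alpha'+\beta)\int_K |u|^2 e^{-\varphi}$. Since $K$ is compact, $\int_K|u|^2$ is controlled by a compact (hence lower-order) term relative to the full graph norm, so for $\beta$ small enough the quadratic form stays coercive modulo a compact perturbation. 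This yields: $D_\delta$ (equivalently $D$ on weighted spaces) is semi-Fredholm with closed range, and $\ker$ on the weighted space is finite-dimensional. To upgrade closed range to surjectivity I would look at the formal adjoint: the adjoint of $D$ on $L^2_\delta$ is (a zeroth-order perturbation of) $D$ on $L^2_{-\delta}$, and for the $-\delta$ weight the sign of the extra curvature contribution on the ends is the opposite; but since $\lambda_{\mathbb{S}}\geq\alpha$ there with room to spare, the same coercivity estimate shows the adjoint has trivial kernel provided the $\delta_i$ are small enough that $\alpha - O(\delta) > 0$ still holds. Hence $D:W^{1,2}_\delta\to L^2_\delta$ is surjective, and the open mapping theorem (or directly the a priori estimate on the orthogonal complement of the kernel) gives the bound $\left(\ref{weighted-sol}\right)$.

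For part (2), the $L^p$ statement, I would bootstrap from the $p=2$ case. The idea is to first solve $Du=f$ in the weighted $L^2$ sense after multiplying $f$ by a cutoff or approximating by compactly supported data, then use elliptic regularity for the first-order elliptic operator $D$: interior $L^p$ estimates plus the product structure on the cylindrical ends (where $D$ has constant coefficients and translation-invariant $L^p$ bounds hold because the cross-sectional Dirac operator on $N_i$ is invertible—this again uses $\lambda_{\mathbb{S}}\geq\alpha$ on the ends) give $\|u\|_{W^{1,p}}\leq C_p(\|Du\|_{L^p}+\|u\|_{L^p})$, and a standard Rellich-type argument on $K$ combined with decay of $u$ on the ends (coming from the spectral gap on $N_i$) absorbs the $\|u\|_{L^p}$ term. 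The main obstacle, I expect, is making the last step precise: one must rule out a nontrivial $L^p$-kernel and get the genuine (not just mod-compact) estimate $\left(\ref{weighted-Lp-cyl}\right)$, which requires knowing that the $L^p$-cokernel of $D$ on a manifold with cylindrical ends, for $p>1$, is governed by the $L^2$-theory with the critical weight $\delta=0$—i.e. that no new obstructions appear as $p$ varies, given that $0$ is not an "indicial root" on any end. Establishing that $0$ is not an indicial root is exactly where the hypothesis $\lambda_{\mathbb{S}}\geq\alpha>0$ on $M\setminus K$ is used decisively: it forces the cross-sectional operator to be invertible, so the allowed weights form an open interval around $0$, and both the small $\delta$ and the $L^p$ flexibility live inside that interval. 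I would treat this via the standard parametrix construction for elliptic operators on manifolds with cylindrical ends (Lockhart–McOwen type theory), which is the technical heart of the argument, with the curvature hypotheses $\left(\ref{ls-condition-cyl-mfd}\right)$ and $\left(\ref{mild-negative-curvature}\right)$ entering only to certify invertibility on the ends and coercivity-mod-compact on $K$ respectively.
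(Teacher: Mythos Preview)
Your approach is quite different from the paper's, and as written it has a real gap. The paper never invokes Fredholm theory, Lockhart--McOwen, or ``coercivity modulo compact.'' Instead it stays entirely within the weighted $L^2$-method by constructing a single global weight $\varphi$ for which the positivity hypothesis of Proposition~\ref{Dirac-solve} holds on all of $M$: on the ends one takes $\varphi=-\delta_\nu\tau_\nu$, while on a compact enlargement of $K$ the paper sets $\varphi$ proportional to $-\log\eta$, where $\eta>0$ is the first Dirichlet eigenfunction of $-\Delta$ on that enlargement. A direct calculation then shows that the quantity $\Delta\varphi-(1-\tfrac{2}{n})(1+\tfrac{1}{\varepsilon})|\nabla\varphi|^{2}+2\lambda_{\mathbb S}$ is bounded below on the compact part by (a positive multiple of) $\mu-2\beta$, where $\mu$ is the first Dirichlet eigenvalue; so positivity holds once $\beta$ is small relative to $\mu$. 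The two regimes are patched by a cutoff, and Proposition~\ref{Dirac-solve} delivers the solution and the estimate in one stroke, with elliptic regularity upgrading to $W^{1,2}_\delta$. Part~(2) is dispatched by citing the standard $L^2\Rightarrow L^p$ passage on cylindrical manifolds (\cite{Don}).

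The gap in your argument is the clause ``for $\beta$ small enough the quadratic form stays coercive modulo a compact perturbation.'' Coercivity modulo the compact term $\|u\|_{L^2(K)}^2$ holds for \emph{every} $\beta$, so this sentence does not isolate what smallness of $\beta$ actually buys; and your subsequent adjoint step only invokes small $\delta$, never small $\beta$, so surjectivity is not established. To close the argument along your lines you must retain the $\|\nabla u\|^2$ term from Bochner and pair it with an inequality of Poincar\'e type, $\|u\|_{L^2(K)}^2\le C\bigl(\|\nabla u\|_{L^2(M)}^2+\|u\|_{L^2(M\setminus K)}^2\bigr)$ (provable via Rellich compactness and the fact that a parallel section vanishing on an open set vanishes identically), after which the negative contribution $-\beta\|u\|_{L^2(K)}^2$ is absorbed for $\beta<1/C$. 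The paper's Dirichlet-eigenfunction construction is precisely a way of packaging this Poincar\'e-type input explicitly, and it has the advantage of yielding a concrete geometric bound on the admissible $\beta$.
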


Note that the $L^{2}$-solvability of the Dirac equation on a complete
Riemannian manifold was established in Theorem 2.11 of \cite{GL}, with the
assumption that $\lambda _{\mathbb{S}}\geq \alpha >0$. Our theorem allows
mild negativity of $\lambda _{\mathbb{S}}$ on a compact domain $K\subset M$
(as in $\left( \ref{mild-negative-curvature}\right) $), when $M$ is a
Riemannian manifold with cylindrical ends. Some recent results of the Dirac
equation on non-compact Riemannian manifolds can be found in \cite{B1}, and 
\cite{Gr06}$\sim $\cite{Gr12}.\bigskip\ 

\bigskip \textbf{Acknowledgement. }We would like to thank Professor Yum-Tong
Siu and Professor Clifford Taubes for interest and support. We thank Harvard
math department for the excellent research environment, where the majority
of the work was carried out. Discussions with Hai Lin, Baosen Wu, Yi Xie and
Jie Zhou were helpful. Last, we thank the anonymous referee for pointing out
many useful references. The work of Q. Ji is partially supported by
NSFC11171069 and NSFC11322103.

\section{Weighted $L^2$-estimates for Dirac operators}

\subsection{Dirac bundles and Dirac operators\label{Dirac-bdl}}

In this section, we recall some basic facts of the Dirac operator and set up
the notations . Let $(M,g)$ be a smooth $n$-dimensional Riemannian manifold
and $C\ell \left( M,g\right) \rightarrow M$ be the corresponding \emph{%
Clifford bundle}. Let $\mathbb{S}\rightarrow M$ be a bundle of left $C\ell
\left( M,g\right) $-modules endowed with a Riemannian metric $\langle \cdot
,\cdot \rangle $ and a Riemannian connection $\nabla $ such that at any $%
x\in M$, for any unit vector $e\in T_{x}M$ and any $s,s^{\prime }\in \mathbb{%
S}_{x}$, 
\begin{equation}
\left\langle e\cdot s,e\cdot s^{\prime }\right\rangle =\left\langle
s,s^{\prime }\right\rangle  \label{orthogonal}
\end{equation}%
where the $\cdot $ is the Clifford multiplication. Furthermore, for any
smooth vector field $V$ on $M$ and smooth section $s$ of $\mathbb{S}$, 
\begin{equation}
\nabla \left( V\cdot s\right) =\left( \nabla V\right) \cdot s+V\cdot \left(
\nabla s\right) ,  \label{Leibniz}
\end{equation}%
where on the right hand side, $\nabla $ are the covariant derivatives of the
Levi-Civita connection on $M$ and a connection on $\mathbb{S}$ for the first
and second terms respectively.

\begin{definition}
A bundle $(\mathbb{S},\langle\cdot,\cdot\rangle,\nabla)$ of $C\ell\left(
M,g\right) $-modules satisfying $\left( \ref{orthogonal}\right) ,\left( \ref%
{Leibniz}\right) $ is called a \emph{Dirac bundle} over $M$ (Definition 5.2 
\cite{LM}) and has a canonically associated \emph{Dirac operator} $D$ such
that for any section $s$ of $\mathbb{S}$, 
\begin{equation}
Ds=\sum_{i=1}^{n}e_{i}\cdot \nabla _{e_{i}}s,  \label{DiracOper}
\end{equation}%
where $\left\{ e_{i}\right\} _{i=1}^{n}$ is any orthonormal basis of $T_{x}M$
for $x$ on $M$.
\end{definition}

On the Dirac bundle $\mathbb{S}$ we can define the \emph{canonical section} $%
\mathfrak{R}$ of $\mathrm{End}\left( \mathbb{S}\right) $, such that for any
smooth section $s$ of $\mathbb{S}$, 
\begin{equation}
\mathfrak{R}\left( s\right) =\frac{1}{2}\sum_{i,j=1}^{n}e_{i}\cdot
e_{j}\cdot R_{e_{i},e_{j}}\left( s\right) ,  \label{R-operator}
\end{equation}%
where $R_{V,W}$ is the curvature transform on $\mathbb{S}$. Then we have the
Bochner formula (c.f. Theorem 8.2 in Chapter II \cite{LM}) 
\begin{equation}
D^{2}=\nabla ^{\ast }\nabla +\mathfrak{R}.  \label{Gen-Bochner}
\end{equation}%
Especially, when $\mathbb{S}$ is the \emph{spinor bundle} over a spin
manifold $M\,$, Lichnerowicz's theorem says 
\begin{equation}
\mathfrak{R}=\frac{1}{4}R\cdot \mathrm{Id}_{\mathbb{S}},  \label{spin-R}
\end{equation}%
where $R$ is the scalar curvature of $\left( M,g\right) $, and $\mathrm{Id}_{%
\mathbb{S}}$ is the identity map on $\mathbb{S}$.

For later applications, for any $x$ on $M$ and $\mathfrak{R}\left( x\right)
\in \mathrm{End}\left( \mathbb{S}_{x}\right) $, we let the function 
\begin{equation}
\lambda _{\mathbb{S}}\left( x\right) =\text{the smallest eigenvalue of }%
\mathfrak{R}\left( x\right) ,  \label{first-eigen-fcn}
\end{equation}%
where $\mathfrak{R}\left( x\right) $ is defined in $\left( \ref{R-operator}%
\right) $. Since $\mathfrak{R}\left( x\right) $ is differentiable with
respect to $x$ on $M$, using the definition of eigenvalues by the Rayleigh
quotient, it is not hard to see $\lambda _{\mathbb{S}}\left( x\right) $ is a 
\emph{Lipschitz} function on $M$.

\bigskip

\bigskip In several important applications (see Section \ref{Z2-Dirac}), one
needs to consider the $\mathbb{Z}_{2}$\emph{-graded Dirac bundle}, i.e. $%
\mathbb{S}$ has a parallel decomposition $\mathbb{S=S}^{+}\mathbb{\oplus S}%
^{-}$ so that $Cl^{i}\left( M\right) \cdot \mathbb{S}^{j}\subseteq \mathbb{S}%
^{ij}$ for all $i,j\in \mathbb{Z}_{2}\simeq \left\{ +,-\right\} $, where $ij$
is the sign of the product of the signs $i$ and $j$, and $Cl^{i}\left(
M\right) $ ($i=+,-$) are the even and odd parts of $Cl\left( M\right) $. The
Dirac operator $D$ splits accordingly%
\begin{equation}
D=\left[ 
\begin{array}{cc}
0 & D^{-} \\ 
D^{+} & 0%
\end{array}%
\right] ,  \label{pos-neg-Dirac}
\end{equation}%
where $D^{\pm }:\Gamma \left( \mathbb{S}^{\pm }\right) \rightarrow \Gamma
\left( \mathbb{S}^{\mp }\right) $, and $D^{+}$ and $D^{-}$ are adjoint of
each other. Note that for Dirac bundles over \emph{even} dimensional
Riemannian manifolds, there always exists a natural $\mathbb{Z}_{2}$-grading
(c.f. Section 6 \cite{LM}). By $\left( \ref{R-operator}\right) $, we see the
curvature operator $\mathfrak{R}\left( \cdot \right) $ splits as well: 
\begin{equation}
\mathfrak{R}\left( \cdot \right) =\left[ 
\begin{array}{cc}
\mathfrak{R}^{+}\left( \cdot \right) & 0 \\ 
0 & \mathfrak{R}^{-}\left( \cdot \right)%
\end{array}%
\right] ,  \label{curvature-pm}
\end{equation}%
where $\mathfrak{R}^{+}\left( \cdot \right) \in \mathrm{End}\left( \mathbb{S}%
^{+},\right) $,$\mathbb{\ }$and $\mathfrak{R}^{-}\left( \cdot \right) \in 
\mathrm{End}\left( \mathbb{S}^{-}\right) $.

\bigskip Let $(M,g)$ be a Riemann surface endowed with a metric and $(E,h)$
be a Hermitian vector bundle over $M.$ We denote the twisted $\mathrm{spin}%
^{c}$ bundle by 
\begin{equation*}
\mathbb{S}=\wedge ^{0,\ast }(M)\otimes E
\end{equation*}%
whose Dirac operator is given by 
\begin{equation*}
D=\sqrt{2}(\overline{\partial }+\overline{\partial }^{\ast }).
\end{equation*}%
There is a natural $\mathbb{Z}_{2}$-graded structure given by $\mathbb{S}=%
\mathbb{S}^{+}\oplus \mathbb{S}^{-}$ where 
\begin{equation*}
\mathbb{S}^{+}=E,\ \ \ \mathbb{S}^{-}=\wedge ^{0,1}(E).
\end{equation*}%
It is easy to see that 
\begin{equation*}
D^{+}=\sqrt{2}\overline{\partial },\ \ \ D^{-}=\sqrt{2}\overline{\partial }%
^{\ast }.
\end{equation*}%
We introduce the following functions 
\begin{eqnarray}
\theta _{E}\left( x\right) &=&\mathrm{the\ \ smallest\ \ eigenvalue\ \ of}\
\ \sqrt{-1}\Lambda R^{E}\left( x\right) ,  \label{thetaE} \\
\Theta _{E}\left( x\right) &=&\mathrm{the\ \ biggest\ \ eigenvalue\ \ of}\ \ 
\sqrt{-1}\Lambda R^{E}\left( x\right) ,  \label{ThetaE}
\end{eqnarray}%
for $x$ on $M$, where $R^{E}$ is the curvature of the Chern connection $%
\nabla $ of $(E,h)$, and $\Lambda $ is the dual Lefschetz operator of $(M,g)$%
. In this case, we can express $\mathfrak{R}^{\pm }$ as follows.

\begin{lemma}
\label{curve. term Spinc} 
\begin{equation}
\mathfrak{R}^{-} =\sqrt{-1}\Lambda R^{E} + K, \ \ \ \ \mathfrak{R}^{+} =-%
\sqrt{-1}\Lambda R^{E},  \label{R}
\end{equation}%
where $K$ is the Gaussian curvature of $(M,g)$.
\end{lemma}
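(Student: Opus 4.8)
The plan is to compute the canonical curvature operator $\mathfrak{R}$ of the twisted $\mathrm{spin}^c$ bundle $\mathbb{S}=\wedge^{0,*}(M)\otimes E$ directly from its definition $\left(\ref{R-operator}\right)$, and then decompose along the $\mathbb{Z}_2$-grading $\mathbb{S}^+=E$, $\mathbb{S}^-=\wedge^{0,1}(E)$. The curvature transform $R_{e_i,e_j}$ on $\mathbb{S}$ splits, via the tensor product connection, into the part coming from the $\mathrm{spin}^c$ (i.e. canonical spinor) curvature on $\wedge^{0,*}(M)$ and the part coming from $R^E$ on $E$; correspondingly $\mathfrak{R}=\mathfrak{R}_{\mathrm{spin}^c}+\mathfrak{R}_E$, where $\mathfrak{R}_E(s)=\tfrac12\sum_{i,j}e_i\cdot e_j\cdot R^E_{e_i,e_j}(s)$. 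The standard Lichnerowicz–Bochner computation on a Kähler manifold (here a Riemann surface) identifies the first summand: on $\wedge^{0,*}(M)$ with its $\mathrm{spin}^c$ Dirac operator $\sqrt2(\bar\partial+\bar\partial^*)$ one has the Bochner identity $D^2=\nabla^*\nabla+\mathfrak R$ with $\mathfrak R$ acting as a multiplication operator built from the Riemannian curvature; on a surface this is governed by the Gaussian curvature $K$, and it is $0$ on the $(0,0)$-part and $K$ on the $(0,1)$-part — this is exactly the well-known fact that for the trivially-twisted $\mathrm{spin}^c$ structure the curvature term vanishes on functions and equals the scalar curvature $\tfrac{R}{4}\cdot 2 = K$-type contribution on $(0,1)$-forms (in real dimension $2$, $R=2K$ and the $\mathrm{spin}^c$ curvature term on $\wedge^{0,1}$ works out to $K$).

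Concretely I would proceed as follows. First, pick a local oriented orthonormal frame $e_1,e_2$ of $TM$ with $e_2=j e_1$, and work in the associated complex picture where Clifford multiplication by the $(1,0)$ and $(0,1)$ covectors acts as (a multiple of) wedge and contraction on $\wedge^{0,*}$. Second, expand $\mathfrak{R}_E(s)=\tfrac12\sum_{i,j}e_i\cdot e_j\cdot R^E_{e_i,e_j}(s)$: only the terms with $i\neq j$ survive (since $e_i\cdot e_i=-1$ contributes a term that cancels by antisymmetry of $R^E$), giving $\mathfrak{R}_E = e_1\cdot e_2\cdot R^E_{e_1,e_2}$. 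Now $R^E_{e_1,e_2}$ is, up to the factor $\sqrt{-1}$, the component $\Lambda R^E$ (the trace of the curvature against the Kähler form), and the endomorphism $e_1\cdot e_2\cdot$ of $\wedge^{0,*}(M)\otimes E$ acts as $+\sqrt{-1}$ on the $(0,0)$-part and $-\sqrt{-1}$ on the $(0,1)$-part, by the explicit action of Clifford multiplication on the exterior algebra. This yields $\mathfrak{R}^+_E=-\sqrt{-1}\Lambda R^E$ on $\mathbb{S}^+=E$ and $\mathfrak{R}^-_E=+\sqrt{-1}\Lambda R^E$ on $\mathbb{S}^-=\wedge^{0,1}(E)$. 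Third, for the untwisted part $\mathfrak{R}_{\mathrm{spin}^c}$, either repeat the same computation with $R^E$ replaced by the curvature of the canonical line bundle/spinor connection on a surface, or — more cleanly — invoke the standard $\mathrm{spin}^c$ Bochner formula together with $D=\sqrt2(\bar\partial+\bar\partial^*)$ and the Kähler identities: on $\wedge^{0,0}(M)$ one gets $2\bar\partial^*\bar\partial=\nabla^*\nabla$, forcing $\mathfrak{R}_{\mathrm{spin}^c}=0$ there, and on $\wedge^{0,1}(M)$ the Weitzenböck/Bochner–Kodaira identity for $(0,1)$-forms on a surface gives precisely the Gaussian curvature $K$. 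Adding the two contributions on each graded piece gives $\mathfrak{R}^+=-\sqrt{-1}\Lambda R^E$ and $\mathfrak{R}^-=\sqrt{-1}\Lambda R^E+K$, as claimed.

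The main obstacle — really a bookkeeping hazard rather than a deep difficulty — is fixing all the normalization conventions consistently: the factor $\sqrt2$ in $D=\sqrt2(\bar\partial+\bar\partial^*)$, the convention for $\Lambda$ and for $\sqrt{-1}\Lambda R^E$ (so that it matches $\theta_E,\Theta_E$ in $\left(\ref{thetaE}\right),\left(\ref{ThetaE}\right)$), the sign in the Clifford relation $e_i\cdot e_j+e_j\cdot e_i=-2\delta_{ij}$, and the identification of Clifford multiplication on $T^*M$ with $\sqrt2(\varepsilon^{0,1}-\iota^{1,0})$-type operators on $\wedge^{0,*}$. Getting the sign of the $(0,0)$ versus $(0,1)$ eigenvalues of $e_1\cdot e_2\cdot$ right is what pins down which of $\mathfrak{R}^\pm$ carries the $+$ and which the $-$; I would double-check it against the known special case $E=K_M^{1/2}$ (the classical Dirac operator), where the lemma must reduce to Lichnerowicz's formula $\mathfrak{R}=\tfrac14 R=\tfrac12 K$ on each half-spinor bundle. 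Once the conventions are pinned down the computation is a few lines.
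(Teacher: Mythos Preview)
Your proposal is correct and is precisely the direct route the paper acknowledges but does not carry out: the paper writes ``We can verify (\ref{R}) by a tedious computation according to the definition of $\mathfrak{R}^{\pm}$. We give an alternative proof by using the $\overline{\partial}$-Bochner formula.'' It then proceeds indirectly: on $\mathbb{S}^-$ it writes the Kodaira--Bochner identity $D^2=-2\nabla^{\bar 1}\nabla_{\bar 1}+2\sqrt{-1}\Lambda R^E+2K$, rewrites $\nabla^*\nabla=-2\nabla^{\bar 1}\nabla_{\bar 1}+[\nabla_1,\nabla_{\bar 1}]$ in a unitary frame, and reads off $\mathfrak{R}^-$ by comparison with the general Bochner formula $D^2=\nabla^*\nabla+\mathfrak{R}$; the $\mathfrak{R}^+$ case is handled the same way.

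So your plan computes $\mathfrak{R}$ from its definition (\ref{R-operator}), splitting the curvature of the tensor-product connection into the $\mathrm{spin}^c$ piece and the twist piece and using the explicit eigenvalues of $e_1\cdot e_2\cdot$ on $\wedge^{0,0}$ versus $\wedge^{0,1}$, whereas the paper infers $\mathfrak{R}^{\pm}$ by matching two Weitzenb\"ock-type identities. Your approach makes the Clifford and sign conventions the visible content of the proof (and your sanity check against $E=K_M^{1/2}$ and Lichnerowicz is exactly the right safeguard); the paper's approach sidesteps the Clifford bookkeeping entirely at the cost of importing the $\bar\partial$-Bochner formula with the correct normalization. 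Both arguments are a few lines once the conventions are pinned down, and in fact your ``cleaner'' option for the untwisted $\mathrm{spin}^c$ piece (invoke the Bochner--Kodaira identity) is essentially the paper's method applied to the trivial-$E$ case.
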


\begin{proof}
We can verify (\ref{R}) by a tedious computation according to the definition
of $\mathfrak{R}^{\pm }$. We give an alternative proof by using the $%
\overline{\partial }$-Bochner formula. We only prove the first one, the
second formula can be proved in the same way. From the $\overline{\partial }$%
-Bochner formula, it follows that on $\mathbb{S}^{-}$ we have 
\begin{equation*}
D^{2}=-2\nabla ^{\bar{1}}\nabla _{\bar{1}}+2\sqrt{-1}\Lambda R^{E}+2K.
\end{equation*}%
Here we have used local coordinate such that $g_{1\bar{1}}=1$ at a given
point. Since 
\begin{equation*}
\nabla ^{\ast }\nabla =-\nabla ^{\bar{1}}\nabla _{\bar{1}}-\nabla ^{1}\nabla
_{1}=-2\nabla ^{\bar{1}}\nabla _{\bar{1}}+[\nabla _{1},\nabla _{\bar{1}}],
\end{equation*}%
we get 
\begin{equation*}
D^{2}=\nabla ^{\ast }\nabla +\sqrt{-1}\Lambda R^{E}+K
\end{equation*}%
which, comparing with the Bochner formula (\ref{Gen-Bochner}) for Dirac
operators, implies the first formula.
\end{proof}

\subsection{Weighted $L^{2}$-estimates and existence results\label{proofs}}

Let $\mathbb{S}$ be a Dirac bundle over a smooth Riemannian manifold $\left(
M,g\right) $ and $D$ be the Dirac operator. For any smooth sections $s\in
\Gamma (M,\mathbb{S})$ with compact support, by $\left( \ref{Gen-Bochner}%
\right) $ and integration by parts, we have 
\begin{equation}
\int_{M}\left\vert Ds\right\vert ^{2}=\int_{M}\left( \left\vert \nabla
s\right\vert ^{2}+\left\langle s,\mathfrak{R}s\right\rangle \right) .
\label{L2-identity}
\end{equation}

\begin{definition}
(Weighted $L^{2}$-space)\label{weighted-L2-space} Let $\varphi :M\rightarrow 
\mathbb{R}$ be a $C^{2}$ function. For any sections $s$ and $s^{\prime }$ of 
$\mathbb{S}$, let the \emph{weighted }inner product of $s$ and $s^{\prime }$
be%
\begin{equation*}
\left( s,s^{\prime }\right) _{\varphi }=\int_{M}\left\langle s,s^{\prime
}\right\rangle e^{-\varphi }.
\end{equation*}
where $\varphi :M\rightarrow \mathbb{R}$ is a $C^2$ function. Let $%
\left\Vert s\right\Vert _{\varphi }=\sqrt{\left( s,s\right) _{\varphi }} $
and denote by $L_{\varphi }^{2}\left( M,\mathbb{S}\right) $ be the space of
sections $s$ of $\mathbb{S}$ such that $\left\Vert s\right\Vert _{\varphi
}<\infty $. We will drop the subscript $\varphi $ when $\varphi =0$.
\end{definition}

For the Dirac operator $D:L_{\varphi }^{2}\left( M,\mathbb{S}\right)
\rightarrow L_{\varphi }^{2}\left( M,\mathbb{S}\right) $, let $D_{\varphi
}^{\ast }$ be its formal adjoint with respect to the measure $e^{-\varphi
}dvol_{g}$. For $D_{\varphi }^{\ast }$, we have the following identity which
is immediate from definitions.

\begin{lemma}
\label{weight-relation}For any smooth section $s$ of $\mathbb{S}$, we have 
\begin{equation}
D_{\varphi }^{\ast }s=e^{\varphi }D\left( e^{-\varphi }s\right) =-\nabla
\varphi \cdot s+Ds.  \label{weighted-Dirac-relation}
\end{equation}
\end{lemma}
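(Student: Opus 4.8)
The statement to prove is Lemma \ref{weight-relation}: for any smooth section $s$ of $\mathbb{S}$,
\begin{equation*}
D_{\varphi}^{\ast}s = e^{\varphi}D(e^{-\varphi}s) = -\nabla\varphi\cdot s + Ds.
\end{equation*}

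The plan is to establish the two equalities separately, both by direct computation from the definitions. For the first equality, I would start from the characterization of the formal adjoint: $D_{\varphi}^{\ast}$ is the operator satisfying $(Ds, t)_{\varphi} = (s, D_{\varphi}^{\ast}t)_{\varphi}$ for all compactly supported smooth sections $s, t$. Writing out the weighted inner products, $(Ds, t)_{\varphi} = \int_M \langle Ds, t\rangle e^{-\varphi} = \int_M \langle Ds, e^{-\varphi}t\rangle$, which is the \emph{unweighted} pairing $(Ds, e^{-\varphi}t)$. Since $D$ is formally self-adjoint with respect to the unweighted $L^2$ inner product (a standard fact for Dirac operators on Dirac bundles, following from \eqref{orthogonal} and the compatibility \eqref{Leibniz} — one can cite Chapter II of \cite{LM}), this equals $(s, D(e^{-\varphi}t)) = \int_M \langle s, D(e^{-\varphi}t)\rangle = \int_M \langle s, e^{\varphi}D(e^{-\varphi}t)\rangle e^{-\varphi} = (s, e^{\varphi}D(e^{-\varphi}t))_{\varphi}$. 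Comparing with the defining property of $D_{\varphi}^{\ast}$ gives $D_{\varphi}^{\ast}t = e^{\varphi}D(e^{-\varphi}t)$ for all smooth $t$.

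For the second equality, I would apply the Leibniz rule for $D$ acting on a product of a scalar function and a section. From \eqref{DiracOper}, $D(e^{-\varphi}s) = \sum_i e_i\cdot\nabla_{e_i}(e^{-\varphi}s) = \sum_i e_i\cdot\big((e_i e^{-\varphi})s + e^{-\varphi}\nabla_{e_i}s\big) = \sum_i e_i\cdot\big(-e^{-\varphi}(e_i\varphi)s\big) + e^{-\varphi}\sum_i e_i\cdot\nabla_{e_i}s = -e^{-\varphi}\big(\sum_i (e_i\varphi)e_i\big)\cdot s + e^{-\varphi}Ds = -e^{-\varphi}\nabla\varphi\cdot s + e^{-\varphi}Ds$, using that $\sum_i (e_i\varphi)e_i = \nabla\varphi = \mathrm{grad}\,\varphi$ in an orthonormal frame. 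Multiplying through by $e^{\varphi}$ yields $e^{\varphi}D(e^{-\varphi}s) = -\nabla\varphi\cdot s + Ds$.

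There is no substantive obstacle here; the only point requiring a little care is the formal self-adjointness of $D$ in the unweighted inner product, which I would either recall explicitly (integrating by parts, using that Clifford multiplication by a tangent vector is skew-adjoint, a consequence of \eqref{orthogonal}, together with \eqref{Leibniz} to move the connection) or simply cite from \cite{LM}. Everything else is a one-line application of the Leibniz rule and the definition of the weighted measure.
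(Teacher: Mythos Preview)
Your proposal is correct and follows exactly the approach the paper intends: the paper states only that the identity ``is immediate from definitions'' and gives no further argument, and your two computations (unwinding the weighted pairing using the formal self-adjointness of $D$, then applying the Leibniz rule for $D$ on $e^{-\varphi}s$) are precisely the details behind that remark.
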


\bigskip We will derive a weighted version of $\left( \ref{L2-identity}%
\right) $ by the technique in \cite{JLY}. Let $\Delta $ be the
Laplace-Beltrami operator on $\left( M,g\right) $.

\begin{proposition}
\label{weighted-L2-prop}For any smooth section $s$ of $\mathbb{S}$ with
compact support and any $C^{2}$ function $\varphi :M\rightarrow \mathbb{R}$,
we have%
\begin{eqnarray}
&&\frac{n-1}{n}\int_{M}\left\vert D_{\varphi }^{\ast }s\right\vert
^{2}e^{-\varphi }+\frac{n-2}{n}\mathrm{Re}\int_{M}\left\langle \nabla
\varphi \cdot s,D_{\varphi }^{\ast }s\right\rangle e^{-\varphi }  \notag \\
&\geq &\int_{M}\left[ \frac{1}{2}\Delta \varphi -\left( \frac{1}{2}-\frac{1}{%
n}\right) \left\vert \nabla \varphi \right\vert ^{2}+\lambda _{\mathbb{S}}%
\right] \left\vert s\right\vert ^{2}e^{-\varphi }.  \label{weighted-L2-est1}
\end{eqnarray}%
Let $\varepsilon >0$ be a constant, we have 
\begin{equation}
\int_{M}\left\vert D_{\varphi }^{\ast }s\right\vert ^{2}e^{-\varphi }\geq
C\int_{M}\left[ \Delta \varphi -\left( 1-\frac{2}{n}\right) \left( 1+\frac{1%
}{\varepsilon }\right) \left\vert \nabla \varphi \right\vert ^{2}+2\lambda _{%
\mathbb{S}}\right] \left\vert s\right\vert ^{2}e^{-\varphi }.
\label{weighted-L2-est}
\end{equation}%
where $C=\frac{n}{2\left( n-1\right) +\left( n-2\right) \varepsilon }.$
Especially, when $n=2$ we have%
\begin{equation}
\int_{M}\left\vert D_{\varphi }^{\ast }s\right\vert ^{2}e^{-\varphi }\geq
\int_{M}\left( \Delta \varphi +2\lambda _{\mathbb{S}}\right) \left\vert
s\right\vert ^{2}e^{-\varphi }.  \label{weighted-L2-est2}
\end{equation}
\end{proposition}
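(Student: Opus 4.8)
The plan is to start from the unweighted Bochner identity $\left(\ref{L2-identity}\right)$ and transfer it to the weighted setting by applying it to $s$ but comparing $D$ and $D_\varphi^\ast$ via Lemma \ref{weight-relation}. Concretely, write $Ds = D_\varphi^\ast s + \nabla\varphi\cdot s$ from $\left(\ref{weighted-Dirac-relation}\right)$, substitute into $\int_M|Ds|^2$, and expand the square to get
\begin{equation*}
\int_M |D_\varphi^\ast s|^2 + 2\,\mathrm{Re}\int_M \langle D_\varphi^\ast s, \nabla\varphi\cdot s\rangle + \int_M |\nabla\varphi|^2|s|^2 = \int_M |\nabla s|^2 + \int_M \langle s, \mathfrak{R}s\rangle.
\end{equation*}
Here all integrals are against $dvol_g$; the weight $e^{-\varphi}$ will be introduced by applying this to $e^{-\varphi/2}s$ in place of $s$, or equivalently by a direct weighted integration by parts — I will use the technique of \cite{JLY}, integrating by parts in the weighted measure so that $\nabla^\ast$ is replaced by its $\varphi$-adjoint. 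The term $\int_M |\nabla s|^2$ produced this way will carry a $\Delta\varphi$ contribution and cross terms with $\nabla\varphi$.

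The key refinement — and this is where the factor $\frac{n-1}{n}$ comes from — is a Kato-type inequality applied to the rough Laplacian term. Rather than simply dropping $|\nabla s|^2 \geq 0$, I would use $|\nabla s|^2 \geq \frac{1}{n}|Ds|^2$ pointwise (the standard refined Kato inequality: the Dirac operator is the full trace of $\nabla s$ over an orthonormal frame, so by Cauchy–Schwarz $|Ds|^2 = |\sum_i e_i\cdot\nabla_{e_i}s|^2 \leq n\sum_i |\nabla_{e_i}s|^2 = n|\nabla s|^2$). This lets me feed a fraction $\frac1n|Ds|^2$ back into the left side, again rewriting $Ds$ in terms of $D_\varphi^\ast s$ and $\nabla\varphi\cdot s$. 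Collecting the $|D_\varphi^\ast s|^2$ coefficients gives $1 - \frac1n = \frac{n-1}{n}$, collecting the mixed real-part terms gives the coefficient $\frac{n-2}{n}$, and the $|\nabla\varphi|^2|s|^2$ terms combine to $-(\frac12 - \frac1n)$ after the bookkeeping, while the weighted integration by parts supplies $\frac12\Delta\varphi$ and the curvature supplies $\lambda_{\mathbb S}$ via $\langle s, \mathfrak{R}s\rangle \geq \lambda_{\mathbb S}|s|^2$. This yields $\left(\ref{weighted-L2-est1}\right)$.

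To pass from $\left(\ref{weighted-L2-est1}\right)$ to $\left(\ref{weighted-L2-est}\right)$, I would estimate the indefinite cross term $\frac{n-2}{n}\mathrm{Re}\int_M \langle \nabla\varphi\cdot s, D_\varphi^\ast s\rangle e^{-\varphi}$ by the weighted Cauchy–Schwarz / Peter–Paul inequality with parameter $\varepsilon$: bound it in absolute value by $\frac{n-2}{n}\big(\frac{\varepsilon}{2}\int_M |D_\varphi^\ast s|^2 e^{-\varphi} + \frac{1}{2\varepsilon}\int_M |\nabla\varphi|^2|s|^2 e^{-\varphi}\big)$ (using $|\nabla\varphi\cdot s| = |\nabla\varphi||s|$ by $\left(\ref{orthogonal}\right)$). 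Moving the $|D_\varphi^\ast s|^2$ part to the left collects a coefficient $\frac{n-1}{n} + \frac{n-2}{n}\cdot\frac{\varepsilon}{2} = \frac{2(n-1)+(n-2)\varepsilon}{2n}$, whose reciprocal is $2C$; dividing through by this and folding the extra $|\nabla\varphi|^2$ term into the bracket (which turns $-(\frac12-\frac1n)$ into $-(\frac12 - \frac1n)(1 + \frac1\varepsilon)$, then a factor of $2$ comes out) produces exactly $\left(\ref{weighted-L2-est}\right)$ with $C = \frac{n}{2(n-1)+(n-2)\varepsilon}$. Finally, setting $n=2$ kills the $|\nabla\varphi|^2$ term entirely (coefficient $1-\frac2n = 0$) and gives $C = \frac{2}{2\cdot 1 + 0} = 1$, so $\left(\ref{weighted-L2-est2}\right)$ is immediate and in fact holds with equality-type sharpness coming from $\left(\ref{weighted-L2-est1}\right)$ directly when $n=2$.

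The main obstacle I anticipate is the careful bookkeeping of the weighted integration by parts — keeping track of which $\nabla$ acts on $e^{-\varphi}$, ensuring the $\Delta\varphi$ term emerges with the right sign and constant $\frac12$, and verifying that after inserting the Kato inequality all the cross terms recombine into precisely the stated coefficients $\frac{n-1}{n}$ and $\frac{n-2}{n}$ rather than something off by a factor. A secondary subtlety is that the Kato inequality step must be applied before the integration by parts is fully unwound, since $|\nabla s|^2$ appears with coefficient $1$ and we only want to convert a $\frac1n$ portion of it; getting the order of operations right is what makes the constants come out clean.
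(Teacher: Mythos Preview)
Your proposal is correct and follows essentially the same route as the paper: the substitution $\sigma = e^{-\varphi/2}s$ to pass between weighted and unweighted integrals, the Bochner identity $\left(\ref{L2-identity}\right)$, the Kato-type inequality $|\nabla s|^2 \geq \frac{1}{n}|Ds|^2$, rewriting $Ds = D_\varphi^\ast s + \nabla\varphi\cdot s$, and finally Peter--Paul with parameter $\varepsilon$ for the cross term. The only organizational difference is that the paper begins directly from $\int_M |D_\varphi^\ast s|^2 e^{-\varphi} = \int_M |D\sigma - \tfrac{1}{2}\nabla\varphi\cdot\sigma|^2$ and unwinds from there (applying Bochner to $\sigma$, then converting $\int_M|\nabla\sigma|^2$ back to an expression in $s$ which produces the $\tfrac{1}{2}\Delta\varphi$ term via integration by parts), whereas your sketch starts from the unweighted Bochner for $s$ and then inserts the weight; these amount to the same computation read in opposite directions.
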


\begin{proof}
Set $\sigma :=e^{-\frac{\varphi }{2}}s$, then we know by $\left( \ref%
{weighted-Dirac-relation}\right) $ 
\begin{eqnarray}
\int_{M}\left\vert D_{\varphi }^{\ast }s\right\vert ^{2}e^{-\varphi }
&=&\int_{M}\left\vert e^{\varphi }D\left( e^{-\varphi }s\right) \right\vert
^{2}e^{-\varphi }  \notag  \label{one} \\
&=&\int_{M}\left\vert D\sigma -\frac{1}{2}\nabla \varphi \cdot \sigma
\right\vert ^{2}  \notag \\
&=&\int_{M}\left( \left\vert D\sigma \right\vert ^{2}+\frac{1}{4}\left\vert
\nabla \varphi \cdot \sigma \right\vert ^{2}-\mathrm{Re}\left\langle \nabla
\varphi \cdot \sigma ,D\sigma \right\rangle \right)  \notag \\
&=&\int_{M}\left( \left\vert \nabla \sigma \right\vert ^{2}+\left\langle
\sigma ,\mathfrak{R}\sigma \right\rangle +\frac{1}{4}\left\vert \nabla
\varphi \right\vert ^{2}\left\vert \sigma \right\vert ^{2}-\mathrm{Re}
\left\langle \nabla \varphi \cdot \sigma ,D\sigma \right\rangle \right) , 
\notag \\
&&
\end{eqnarray}%
where in the last identity we have used $\left( \ref{L2-identity}\right) $.

We first rewrite $\int_{M}\left\vert \nabla \sigma \right\vert ^{2}$ as
follows 
\begin{eqnarray}
\int_{M}\left\vert \nabla \sigma \right\vert ^{2} &=&\int_{M}\left\vert
\nabla s-\frac{1}{2}d\varphi \otimes s\right\vert ^{2}e^{-\varphi }  \notag
\\
&=&\int_{M}\left( \left\vert \nabla s\right\vert ^{2}+\frac{1}{4}\left\vert
\nabla \varphi \right\vert ^{2}\left\vert s\right\vert ^{2}\right)
e^{-\varphi }-\mathrm{Re}\int_{M}\left\langle \nabla s,d\varphi \otimes
s\right\rangle e^{-\varphi }.  \label{two}
\end{eqnarray}

For the last term, we have%
\begin{eqnarray}
-\mathrm{Re}\int_{M}\left\langle \nabla s,d\varphi \otimes s\right\rangle
e^{-\varphi } &=&-\mathrm{Re}\sum_{i=1}^{n}\int_{M}\left\langle \nabla
_{e_{i}}s,d\varphi \left( e_{i}\right) s\right\rangle e^{-\varphi }  \notag
\\
&=&\frac{1}{2}\mathrm{Re}\int_{M}\nabla _{\nabla \left( e^{-\varphi }\right)
}\left\vert s\right\vert ^{2}  \notag \\
&=&\frac{1}{2}\int_{M}\left[ \mathrm{div}\left( \left\vert s\right\vert
^{2}\nabla \left( e^{-\varphi }\right) \right) -\left\vert s\right\vert
^{2}\Delta \left( e^{-\varphi }\right) \right]  \notag \\
&=&\frac{1}{2}\int_{M}\left( \Delta \varphi -\left\vert \nabla \varphi
\right\vert ^{2}\right) \left\vert s\right\vert ^{2}e^{-\varphi },
\label{three}
\end{eqnarray}%
where in the third line we have used $Vf=\mathrm{div}\left( fV\right) -f%
\mathrm{div}V$ for the vector field $V=\nabla \left( e^{-\varphi }\right) $
and function $f=\left\vert s\right\vert ^{2}$.

From $\left( \ref{two}\right) $ and $\left( \ref{three}\right),$ we obtain%
\begin{eqnarray}
&&\int_{M}\left\vert \nabla \sigma \right\vert ^{2}  \notag \\
&=&\int_{M}\left( \left\vert \nabla s\right\vert ^{2}+\frac{1}{4}\left\vert
\nabla \varphi \right\vert ^{2}\left\vert s\right\vert ^{2}\right)
e^{-\varphi }+\frac{1}{2}\int_{M}\left( \Delta \varphi -\left\vert \nabla
\varphi \right\vert ^{2}\right) \left\vert s\right\vert ^{2}e^{-\varphi } 
\notag \\
&=&\int_{M}\left[ \left\vert \nabla s\right\vert ^{2}+\left( \frac{1}{2}%
\Delta \varphi -\frac{1}{4}\left\vert \nabla \varphi \right\vert ^{2}\right)
\left\vert s\right\vert ^{2}\right] e^{-\varphi }.  \label{four}
\end{eqnarray}

For the term $\mathrm{Re}\left\langle \nabla \varphi \cdot \sigma ,D\sigma
\right\rangle $, we have%
\begin{eqnarray}
\mathrm{Re}\left\langle \nabla \varphi \cdot \sigma ,D\sigma \right\rangle
&=&\mathrm{Re}\left\langle e^{-\frac{\varphi }{2}}\nabla \varphi \cdot
s,D\left( e^{-\frac{\varphi }{2}}s\right) \right\rangle  \notag \\
&=&\mathrm{Re}\left\langle \nabla \varphi \cdot s,e^{\frac{\varphi }{2}%
}D\left( e^{-\frac{\varphi }{2}}s\right) \right\rangle e^{-\varphi }  \notag
\\
&=&\mathrm{Re}\left\langle \nabla \varphi \cdot s,D_{\frac{\varphi }{2}%
}^{\ast }\left( s\right) \right\rangle e^{-\varphi }  \notag \\
&=&\mathrm{Re}\left\langle \nabla \varphi \cdot s,D_{\varphi }^{\ast }\left(
s\right) +\frac{1}{2}\nabla \varphi \cdot s\right\rangle e^{-\varphi },
\label{Five}
\end{eqnarray}%
where in the fourth identity we have used Lemma \ref{weight-relation}.

By the Cauchy-Schwarz inequality, we get 
\begin{equation}
\left\vert \nabla s\right\vert ^{2}\geq \frac{1}{n}\left\vert Ds\right\vert
^{2}.  \label{grad-and-Ds}
\end{equation}

Combining $\left( \ref{one}\right) $, $\left( \ref{four}\right) $, $\left( %
\ref{Five}\right) $, $\left( \ref{grad-and-Ds}\right) $ and using $\left( %
\ref{weighted-Dirac-relation}\right) $, we have%
\begin{eqnarray}
&&\int_{M}\left\vert D_{\varphi }^{\ast }s\right\vert ^{2}e^{-\varphi } 
\notag \\
&\geq &\int_{M}\left[ \frac{1}{n}\left\vert D_{\varphi }^{\ast }s+\nabla
\varphi \cdot s\right\vert ^{2}+\left( \frac{1}{2}\Delta \varphi -\frac{1}{4}%
\left\vert \nabla \varphi \right\vert ^{2}\right) \left\vert s\right\vert
^{2}\right] e^{-\varphi }  \notag \\
&&+\int_{M}\left( \lambda _{\mathbb{S}}\left\vert s\right\vert ^{2}+\frac{1}{%
4}\left\vert \nabla \varphi \right\vert ^{2}\left\vert s\right\vert
^{2}\right) e^{-\varphi }-\mathrm{Re}\int_{M}\left\langle \nabla \varphi
\cdot s,D_{\varphi }^{\ast }s+\frac{1}{2}\nabla \varphi \cdot s\right\rangle
e^{-\varphi }  \notag \\
&=&\int_{M}\left[ \frac{1}{2}\Delta \varphi -\left( \frac{1}{2}-\frac{1}{n}%
\right) \left\vert \nabla \varphi \right\vert ^{2}+\lambda _{\mathbb{S}}%
\right] \left\vert s\right\vert ^{2}e^{-\varphi }  \notag \\
&&-2\left( \frac{1}{2}-\frac{1}{n}\right) \mathrm{Re}\int_{M}\left\langle
\nabla \varphi \cdot s,D_{\varphi }^{\ast }s\right\rangle e^{-\varphi }+%
\frac{1}{n}\int_{M}\left\vert D_{\varphi }^{\ast }s\right\vert
^{2}e^{-\varphi }  \label{id} \\
&\geq &\int_{M}\left[ \frac{1}{2}\Delta \varphi -\left( \frac{1}{2}-\frac{1}{%
n}\right) \left( 1+\frac{1}{\varepsilon }\right) \left\vert \nabla \varphi
\right\vert ^{2}+\lambda _{\mathbb{S}}\right] \left\vert s\right\vert
^{2}e^{-\varphi }  \notag \\
&&-\left( \left( \frac{1}{2}-\frac{1}{n}\right) \varepsilon -\frac{1}{n}%
\right) \int_{M}\left\vert D_{\varphi }^{\ast }s\right\vert ^{2}e^{-\varphi
},  \notag
\end{eqnarray}
where $\varepsilon $ is any positive constant. We have thus proved (\ref%
{weighted-L2-est}). The inequality (\ref{id}) also gives the estimate (\ref%
{weighted-L2-est1}).
\end{proof}

\bigskip

To establish the $L^{2}$-existence theorem, we also need the following
variant of Riesz representation Theorem:

\begin{lemma}
\label{Lemma-Hormander}(c.f. \cite{H1}) Let $T:H_{1}\longrightarrow H_{2}$
be a closed and densely defined operator between Hilbert spaces. For any $%
f\in H_{2}$ and any constant $C>0$, the following conditions are equivalent

\begin{enumerate}
\item there exists some $u\in \mathrm{Dom}(T)$ such that $Tu=f$ and $\Vert
u\Vert _{H_{1}}\leq C$.

\item $|(f,s)_{H_{2}}|\leq C\Vert T^{\ast }s\Vert _{H_{1}}$ holds for any $%
s\in \mathrm{Dom}(T^{\ast })$.
\end{enumerate}
\end{lemma}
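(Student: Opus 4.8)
The plan is to prove the two implications separately; all the substance lies in $(2)\Rightarrow(1)$, and the engine is the Riesz representation theorem. For the direction $(1)\Rightarrow(2)$, given $u\in\mathrm{Dom}(T)$ with $Tu=f$ and $\|u\|_{H_1}\le C$, I would simply write, for any $s\in\mathrm{Dom}(T^{\ast})$, $(f,s)_{H_2}=(Tu,s)_{H_2}=(u,T^{\ast}s)_{H_1}$ by the definition of the adjoint, and then apply the Cauchy--Schwarz inequality together with $\|u\|_{H_1}\le C$ to get $|(f,s)_{H_2}|\le C\|T^{\ast}s\|_{H_1}$.

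For $(2)\Rightarrow(1)$, I would consider the linear subspace $R:=\{T^{\ast}s:s\in\mathrm{Dom}(T^{\ast})\}\subseteq H_1$ and define $\ell\colon R\to\mathbb{C}$ by $\ell(T^{\ast}s):=(s,f)_{H_2}$. The first key step is to check that $\ell$ is well defined: if $T^{\ast}s_1=T^{\ast}s_2$, then $T^{\ast}(s_1-s_2)=0$, and hypothesis (2) applied to $s_1-s_2$ forces $(f,s_1-s_2)_{H_2}=0$; this is precisely where (2) is indispensable. Hypothesis (2) also gives $|\ell(T^{\ast}s)|\le C\|T^{\ast}s\|_{H_1}$, so $\ell$ is bounded on $R$ with norm $\le C$. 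Extending $\ell$ by continuity to $\overline{R}$ and by zero on $\overline{R}^{\perp}$ (equivalently, invoking Hahn--Banach) yields a bounded linear functional on all of $H_1$ of norm $\le C$; by the Riesz representation theorem there is $u\in H_1$ with $\|u\|_{H_1}\le C$ and $\ell(v)=(v,u)_{H_1}$ for all $v\in H_1$. Specializing $v=T^{\ast}s$ gives $(T^{\ast}s,u)_{H_1}=(s,f)_{H_2}$ for every $s\in\mathrm{Dom}(T^{\ast})$.

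To conclude, I would invoke von Neumann's theorem: since $T$ is closed and densely defined, $T^{\ast}$ is densely defined and $T^{\ast\ast}=T$. The identity $(T^{\ast}s,u)_{H_1}=(s,f)_{H_2}$ valid for all $s\in\mathrm{Dom}(T^{\ast})$ is exactly the statement that $u\in\mathrm{Dom}\big((T^{\ast})^{\ast}\big)$ with $(T^{\ast})^{\ast}u=f$; hence $u\in\mathrm{Dom}(T)$ and $Tu=f$, the bound $\|u\|_{H_1}\le C$ being already in hand. The only points demanding care are the well-definedness of $\ell$ and the appeal to $T^{\ast\ast}=T$, the latter being where closedness of $T$ genuinely enters — without it one would at best recover a solution for the closure of $T$.
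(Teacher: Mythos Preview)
Your proof is correct and is the standard argument. The paper itself does not supply a proof of this lemma; it simply states the result with a reference to H\"ormander (\cite{H1}), so there is nothing to compare against beyond noting that your Riesz-representation approach is precisely the classical one H\"ormander uses.
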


We have the following proposition of the $L^{2}$-existence result for the
Dirac operator.

\begin{proposition}
\label{Dirac-solve}Let $\mathbb{S}$ be a Dirac bundle over a smooth
Riemannian manifold $\left( M,g\right) $ and $D$ be the Dirac operator. Let $%
\varphi :M\rightarrow \mathbb{R}$ be a $C^{2}$ function and $\varepsilon >0$
be a constant such that 
\begin{equation}
\Delta \varphi -\left( 1-\frac{2}{n}\right) \left( 1+\frac{1}{\varepsilon }%
\right) \left\vert \nabla \varphi \right\vert ^{2}+2\lambda _{\mathbb{S}%
}\geq 0\text{ on }M.  \label{positivity}
\end{equation}%
For any section $f\in L_{\varphi }^{2}(M,\mathbb{S})$, if 
\begin{equation}
\int_{M}\frac{\left\vert f\right\vert ^{2}}{\Delta \varphi -\left( 1-\frac{2%
}{n}\right) \left( 1+\frac{1}{\varepsilon }\right) \left\vert \nabla \varphi
\right\vert ^{2}+2\lambda _{\mathbb{S}}}e^{-\varphi }<\infty ,
\label{f-conditon}
\end{equation}%
then there exists a section $u\in L_{\varphi }^{2}(M,\mathbb{S})$ such that 
\begin{equation*}
Du=f\ \mathrm{and}\ \left\Vert u\right\Vert _{\varphi }^{2}\leq \frac{1}{C}%
\int_{M}\frac{\left\vert f\right\vert ^{2}}{\Delta \varphi -\left( 1-\frac{2%
}{n}\right) \left( 1+\frac{1}{\varepsilon }\right) \left\vert \nabla \varphi
\right\vert ^{2}+2\lambda _{\mathbb{S}}}e^{-\varphi },
\end{equation*}%
where $C=\frac{n}{2\left( n-1\right) +\left( n-2\right) \varepsilon }$.
\end{proposition}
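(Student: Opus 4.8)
The plan is to run H\"{o}rmander's $L^{2}$-duality argument, feeding the weighted a priori estimate (\ref{weighted-L2-est}) of Proposition \ref{weighted-L2-prop} into the abstract existence criterion of Lemma \ref{Lemma-Hormander}. Throughout write
\[
w:=\Delta \varphi -\Big(1-\tfrac{2}{n}\Big)\Big(1+\tfrac{1}{\varepsilon }\Big)\left\vert \nabla \varphi \right\vert ^{2}+2\lambda _{\mathbb{S}},\qquad C:=\frac{n}{2(n-1)+(n-2)\varepsilon },
\]
so that (\ref{positivity}) says $w\geq 0$ on $M$ and (\ref{f-conditon}) says $\int_{M}\frac{\left\vert f\right\vert ^{2}}{w}e^{-\varphi }<\infty $; in particular $f=0$ almost everywhere on $\{w=0\}$. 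I take $H_{1}=H_{2}=L_{\varphi }^{2}(M,\mathbb{S})$ and let $T$ be the \emph{maximal} closed realization of $D$ on $L_{\varphi }^{2}$, i.e.\ $\mathrm{Dom}(T)=\{s\in L_{\varphi }^{2}(M,\mathbb{S}):Ds\in L_{\varphi }^{2}(M,\mathbb{S})\}$ with $Ds$ in the distributional sense. Then $T$ is closed and densely defined (it contains all smooth sections of $\mathbb{S}$ with compact support), and, since $D_{\varphi }^{\ast }$ of Lemma \ref{weight-relation} is its formal adjoint, a standard fact for differential operators with smooth coefficients gives that $T^{\ast }$ is the \emph{minimal} closed realization of $D_{\varphi }^{\ast }$; hence $\mathrm{Dom}(T^{\ast })$ is the closure of the smooth compactly supported sections in the graph norm $\Vert s\Vert _{\varphi }^{2}+\Vert D_{\varphi }^{\ast }s\Vert _{\varphi }^{2}$.

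The first real step is to propagate (\ref{weighted-L2-est}) --- which Proposition \ref{weighted-L2-prop} only asserts for compactly supported smooth sections --- to every $s\in \mathrm{Dom}(T^{\ast })$. Given such $s$, choose smooth compactly supported $s_{k}$ with $s_{k}\to s$ and $D_{\varphi }^{\ast }s_{k}\to T^{\ast }s$ in $L_{\varphi }^{2}$, apply (\ref{weighted-L2-est}) to each $s_{k}$, and pass to the limit: since $w\geq 0$, Fatou's lemma gives
\[
\int_{M}w\left\vert s\right\vert ^{2}e^{-\varphi }\leq \liminf_{k\to \infty }\int_{M}w\left\vert s_{k}\right\vert ^{2}e^{-\varphi }\leq \frac{1}{C}\liminf_{k\to \infty }\Vert D_{\varphi }^{\ast }s_{k}\Vert _{\varphi }^{2}=\frac{1}{C}\Vert T^{\ast }s\Vert _{\varphi }^{2}.
\]
Because Fatou is applied on the side carrying the (possibly unbounded) weight $w$, no completeness hypothesis on $(M,g)$ enters here.

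With this estimate in hand, for any $s\in \mathrm{Dom}(T^{\ast })$ the Cauchy-Schwarz inequality on $\{w>0\}$ --- legitimate since $f=0$ a.e.\ where $w=0$ --- yields
\[
\left\vert (f,s)_{\varphi }\right\vert \leq \int_{\{w>0\}}\frac{\left\vert f\right\vert }{\sqrt{w}}\,\sqrt{w}\,\left\vert s\right\vert e^{-\varphi }\leq \Big(\int_{M}\frac{\left\vert f\right\vert ^{2}}{w}e^{-\varphi }\Big)^{1/2}\Big(\int_{M}w\left\vert s\right\vert ^{2}e^{-\varphi }\Big)^{1/2}\leq \Big(\frac{1}{C}\int_{M}\frac{\left\vert f\right\vert ^{2}}{w}e^{-\varphi }\Big)^{1/2}\Vert T^{\ast }s\Vert _{\varphi }.
\]
So condition (2) of Lemma \ref{Lemma-Hormander} holds with constant $\big(\frac{1}{C}\int_{M}\frac{\left\vert f\right\vert ^{2}}{w}e^{-\varphi }\big)^{1/2}$, which is finite by (\ref{f-conditon}). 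The lemma then produces $u\in \mathrm{Dom}(T)\subset L_{\varphi }^{2}(M,\mathbb{S})$ with $Du=f$ and $\Vert u\Vert _{\varphi }^{2}\leq \frac{1}{C}\int_{M}\frac{\left\vert f\right\vert ^{2}}{w}e^{-\varphi }$, which is the claim; when $f$ is smooth, ellipticity of $D$ makes $u$ smooth.

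The one genuinely delicate point is the closure step in the second paragraph: the a priori estimate must not be lost when passing from compactly supported sections to all of $\mathrm{Dom}(T^{\ast })$. Working with the maximal realization $T$ (so that $T^{\ast }$ is exactly the closure of $D_{\varphi }^{\ast }$ from compactly supported sections) together with the Fatou argument is precisely what makes this go through with no global assumption on $(M,g)$; everything else is a routine Cauchy-Schwarz and bookkeeping of the constant $C$.
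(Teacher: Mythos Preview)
Your proof is correct and follows the same broad architecture as the paper: feed the weighted a priori estimate of Proposition \ref{weighted-L2-prop} into the abstract duality criterion of Lemma \ref{Lemma-Hormander}, after identifying $T^{\ast}$ with the minimal closed realization of $D_{\varphi}^{\ast}$. The one genuine difference is in how the two arguments pass from compactly supported test sections to all of $\mathrm{Dom}(T^{\ast})$ and thence to a global solution on $M$.

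The paper does \emph{not} work directly on $M$. It first fixes a relatively compact open $\Omega\subset M$, sets up $T$ on $L_{\varphi}^{2}(\Omega,\mathbb{S})$, and observes that because $\varphi$, $\nabla\varphi$, $\Delta\varphi$ and $\lambda_{\mathbb{S}}$ are bounded on $\overline{\Omega}$, the weight $w$ is bounded there; hence graph-norm convergence $s_{k}\to s$ immediately gives $\int_{\Omega}w|s_{k}|^{2}e^{-\varphi}\to\int_{\Omega}w|s|^{2}e^{-\varphi}$ and the estimate extends to $\mathrm{Dom}(T^{\ast})$ without Fatou. This produces a solution $u_{\Omega}$ on each $\Omega$; an exhaustion $\Omega_{0}\subset\subset\Omega_{1}\subset\subset\cdots\nearrow M$ together with the uniform bound then yields a global $u$ by a weak-limit argument. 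Your route instead keeps $M$ whole and handles the possibly unbounded $w$ in one stroke via Fatou (after passing to an a.e.\ convergent subsequence), which lets you invoke Lemma \ref{Lemma-Hormander} once and be done. Your version is a bit more streamlined---no exhaustion, no weak limit---while the paper's version trades that for a slightly more elementary density step on each compact piece. Either way the constant $C$ and the final estimate are identical.
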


\begin{proof}
Fix some open subset $\Omega $ which is relatively compact in $M$, we
restrict $\mathbb{S}$ to $\Omega $ and introduce 
\begin{equation*}
H_{1}=H_{2}=L_{\varphi }^{2}(\Omega ,\mathbb{S}).
\end{equation*}%
We let the closed, densely defined operator%
\begin{equation*}
T:\mathrm{Dom}(T)=\{u\in H_{1}:Du\in H_{1}\}\longrightarrow H_{2}
\end{equation*}%
to be the maximal differential operator defined by the Dirac operator(acting
on smooth sections) $\ D:\Gamma (\Omega ,\mathbb{S})\rightarrow \Gamma
(\Omega ,\mathbb{S})$.

From Proposition \ref{weighted-L2-prop} we have for any compact supported
section $s\in \Gamma (\Omega ,\mathbb{S})$, 
\begin{equation}
\int_{M}\left\vert D_{\varphi }^{\ast }s\right\vert ^{2}e^{-\varphi }\geq
C\int_{M}\left( \Delta \varphi -\left( 1-\frac{2}{n}\right) \left( 1+\frac{1%
}{\varepsilon }\right) \left\vert \nabla \varphi \right\vert ^{2}+2\lambda _{%
\mathbb{S}}\right) \left\vert s\right\vert ^{2}e^{-\varphi }.
\label{L2-estimate}
\end{equation}%
Since $T^{\ast }$ is given by the minimal differential operator defined by
the formally adjoint operator$\ D_{\varphi }^{\ast }:\Gamma (\Omega ,\mathbb{%
S})\rightarrow \Gamma (\Omega ,\mathbb{S})$(c.f. \cite{H2} for a more
general result), we know that smooth sections of $\mathbb{S}$ with compact
support in $\Omega $ are dense in $\mathrm{Dom}(T^{\ast })$ with respect to
the graph norm: 
\begin{equation*}
s\mapsto \sqrt{\Vert s\Vert _{H_{2}}^{2}+\Vert T^{\ast }s\Vert _{H_{1}}^{2}}.
\end{equation*}%
Because $\varphi $ and $\nabla \varphi $ are bounded on $\Omega $, $\left( %
\ref{L2-estimate}\right) $ holds for any $s\in \mathrm{Dom}(T^{\ast })$.

Let $f_{\Omega }\in L_{\varphi }^{2}(\Omega ,\mathbb{S})$. As we have proved
that Proposition \ref{weighted-L2-prop} is true for any $s\in \mathrm{Dom}%
(T^{\ast })$, by Cauchy-Schwarz inequality, we get the following estimate
for any $s\in \mathrm{Dom}(T^{\ast })$ 
\begin{equation*}
\left\vert (f_{\Omega },s)_{H_{2}}\right\vert ^{2}\leq \frac{1}{C}%
\int_{\Omega }\frac{\left\vert f_{\Omega }\right\vert ^{2}}{\Delta \varphi
-\left( 1-\frac{2}{n}\right) \left( 1+\frac{1}{\varepsilon }\right)
\left\vert \nabla \varphi \right\vert ^{2}+2\lambda _{\mathbb{S}}}%
e^{-\varphi }\cdot \Vert T^{\ast }s\Vert _{H_{1}}^{2}
\end{equation*}

Lemma \ref{Lemma-Hormander} implies that there exists some $u_{\Omega }\in 
\mathrm{Dom}(T)\subseteq L_{\varphi }^{2}(\Omega ,\mathbb{S})$ such that%
\begin{equation}
Du_{\Omega }=f_{\Omega }\ \mathrm{and}\ \int_{\Omega }\left\vert u_{\Omega
}\right\vert ^{2}e^{-\varphi }\leq \frac{1}{C}\int_{\Omega }\frac{\left\vert
f_{\Omega }\right\vert ^{2}}{\Delta \varphi -\left( 1-\frac{2}{n}\right)
\left( 1+\frac{1}{\varepsilon }\right) \left\vert \nabla \varphi \right\vert
^{2}+2\lambda _{\mathbb{S}}}e^{-\varphi }.  \label{Du-omega}
\end{equation}

Given $f\in L_{\varphi }^{2}(M,\mathbb{S})$ with $\int_{M}\frac{\left\vert
f\right\vert ^{2}}{\Delta \varphi -\left( 1-\frac{2}{n}\right) \left( 1+%
\frac{1}{\varepsilon }\right) \left\vert \nabla \varphi \right\vert
^{2}+2\lambda _{\mathbb{S}}}e^{-\varphi }<\infty $, we can finish the proof
by applying $\left( \ref{Du-omega}\right) $ to an increasing sequence of
open subsets $\Omega _{0}\subset \subset \Omega _{1}\subset \subset \cdots
\nearrow M$ to get a sequence of $u_{\nu }\in L_{\varphi }^{2}(\Omega _{\nu
},\mathbb{S})$ such that $Du_{\nu }=f|_{\Omega _{\nu }}$ in the sense of
distribution and 
\begin{equation*}
\int_{\Omega _{\nu }}\left\vert u_{\nu }\right\vert ^{2}e^{-\varphi }\leq 
\frac{1}{C}\int_{\Omega _{\nu }}\frac{\left\vert f\right\vert ^{2}}{\Delta
\varphi -\left( 1-\frac{2}{n}\right) \left( 1+\frac{1}{\varepsilon }\right)
\left\vert \nabla \varphi \right\vert ^{2}+2\lambda _{\mathbb{S}}}%
e^{-\varphi },\nu =0,1,\cdots .
\end{equation*}%
The above uniform $L^{2}$-estimate allows us obtain a desired solution of $%
Du=f$ by taking a weak limit of $\{u_{\nu }\}$. The proof is
complete.\bigskip
\end{proof}

\begin{remark}
\bigskip Different from the Bochner formula of the $\overline{\partial }$%
-operator, for the Dirac operator $D$, only $\left\vert Ds\right\vert ^{2}$
is involved on the left side of $\left( \ref{L2-identity}\right) $. This
simplifies the $L^{2}$-estimates in our setting compared to \cite{H1}, as we
only need the minimal extension of the Dirac operator $D$, for which the set
of compactly supported smooth sections is dense in its domain with respect
to the graph norm.
\end{remark}

\section{The first eigenvalue of Dirac operators\label{2d}}

\bigskip By constructing the weight function $\varphi $ in Theorem \ref%
{RieSurfaceCase} from certain Poisson equation, we obtain the following

\begin{corollary}
\label{eigen-by-curvature-integral}If $M$ is a closed $2$-dimensional
Riemannian manifold, then 
\begin{equation}
\lambda _{\mathrm{min}}(D^{2})\geq \frac{2}{\mathrm{Vol}(M)}\int_{M}\lambda
_{\mathbb{S}},  \label{eigen-RieSfc}
\end{equation}%
where $\lambda _{\mathrm{min}}(D^{2})$ is the first eigenvalue of $D^{2}$.
Consequently, if 
\begin{equation}
\int_{M}\lambda _{\mathbb{S}}>0,  \label{positive-integral-condition}
\end{equation}%
then $D:L^{2}(M,\mathbb{S})\rightarrow L^{2}(M,\mathbb{S})$ defines an
isomorphism.
\end{corollary}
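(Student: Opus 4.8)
The plan is to derive the eigenvalue bound (\ref{eigen-RieSfc}) by constructing a good weight function $\varphi$ so that Theorem \ref{RieSurfaceCase} applies with the constant $\Delta\varphi + 2\lambda_{\mathbb{S}}$ forced to be a positive constant. Since $M$ is closed, the obstruction to solving the Poisson equation $\Delta\varphi = c - 2\lambda_{\mathbb{S}}$ is exactly that the right-hand side must integrate to zero against $dvol_g$; this pins down the constant to be $c = \frac{2}{\mathrm{Vol}(M)}\int_M \lambda_{\mathbb{S}}$. First I would note that $\lambda_{\mathbb{S}}$ is Lipschitz (hence $L^2$), so by standard elliptic theory on closed manifolds there is a $C^{1,\alpha}$ (in fact $W^{2,p}$ for all $p$, and $C^2$ if one smooths $\lambda_{\mathbb{S}}$ slightly — or one simply works with the weak formulation) solution $\varphi$ to
\begin{equation*}
\Delta\varphi + 2\lambda_{\mathbb{S}} = \frac{2}{\mathrm{Vol}(M)}\int_M \lambda_{\mathbb{S}} =: c.
\end{equation*}
If the regularity of $\varphi$ is a concern, I would mollify $\lambda_{\mathbb{S}}$ to a smooth $\lambda_\epsilon$ with $\int_M \lambda_\epsilon$ close to $\int_M\lambda_{\mathbb{S}}$, solve the corresponding Poisson equation for a smooth $\varphi_\epsilon$, and let $\epsilon\to 0$ at the end; this only costs an arbitrarily small amount in the constant, so it does not affect the final inequality.

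Next, I would split into two cases according to the sign of $c$. If $c > 0$, then $\Delta\varphi + 2\lambda_{\mathbb{S}} = c > 0$ on all of $M$, so the hypothesis of Theorem \ref{RieSurfaceCase} is met; for any eigensection $f$ with $D^2 f = \mu f$, $\mu = \lambda_{\mathrm{min}}(D^2)$, I apply the theorem to solve $Du = f$ with the estimate $\int_M |u|^2 e^{-\varphi} \le \frac{1}{c}\int_M |f|^2 e^{-\varphi}$. Then, using $D^2 f = \mu f$ and integrating by parts (legitimate since $f$ is a smooth eigensection on a closed manifold and $D$ is self-adjoint), I compute
\begin{equation*}
\mu \int_M |f|^2 e^{-\varphi} \;{\le}\; \text{(something involving } \langle Df, \cdot\rangle),
\end{equation*}
more precisely: pairing $f = \frac{1}{\mu}D^2 f$ against $f$ with weight $e^{-\varphi}$ and using that $Du = f$ relates $f$ to $u$ via $D$. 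The cleanest route is: $\|f\|_\varphi^2 = (D^2 f, f)_\varphi/\mu$ — but the weighted adjoint is not $D$, so instead I would use the unweighted $L^2$ pairing (replace $e^{-\varphi}$ estimates by noting $\varphi$ is bounded, or better, run the Hilbert-space argument of Lemma \ref{Lemma-Hormander} directly). Concretely: $(f,f) = (Du, f) = (u, D f)$ in the unweighted inner product, so $\|f\|^2 \le \|u\|\,\|Df\|$; combined with $\|u\|^2 \lesssim \frac{1}{c}\|f\|^2$ (after absorbing the bounded weight $e^{\pm\varphi}$, which is where a constant would leak unless one is careful) and $\|Df\|^2 = (D^2 f, f) = \mu\|f\|^2$, one gets $\mu \ge c$ up to the weight factors. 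To get the sharp bound without weight losses, the right move is to redo the Hörmander duality argument in the weighted space with the exact solution operator, so that $\lambda_{\mathrm{min}}(D^2)\ge c$ comes out clean: for an eigensection $f$, $\|f\|_\varphi^2 = (f,f)_\varphi = (Du,f)_\varphi = (u, D^*_\varphi f)_\varphi$, and $D^*_\varphi f = Df - \nabla\varphi\cdot f$ by Lemma \ref{weight-relation}; this introduces a first-order term that must be handled. I expect this bookkeeping — matching the weighted adjoint against the self-adjoint eigenvalue equation — to be the main technical obstacle, and the author likely circumvents it by a slicker argument (e.g. testing the estimate (\ref{weighted-L2-est2}) directly on the eigensection $s=f$, using $D^*_\varphi f$, $\Delta\varphi + 2\lambda_{\mathbb{S}} = c$, and integrating by parts to relate $\int|D^*_\varphi f|^2 e^{-\varphi}$ to $\mu\int|f|^2 e^{-\varphi}$ plus gradient terms that conspire to drop out on a closed surface).

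For the remaining case $c \le 0$, the inequality (\ref{eigen-RieSfc}) asserts $\lambda_{\mathrm{min}}(D^2)\ge c$ with $c\le 0$, which is automatic since $D^2\ge 0$ forces $\lambda_{\mathrm{min}}(D^2)\ge 0 \ge c$. So only the case $c>0$ requires work, and there the bound $\lambda_{\mathrm{min}}(D^2)\ge c > 0$ shows $0$ is not an eigenvalue, i.e. $\ker D = 0$; since $D$ is self-adjoint elliptic on a closed manifold, $D: L^2(M,\mathbb{S})\to L^2(M,\mathbb{S})$ is then an isomorphism (its index is zero and it is injective, hence surjective with bounded inverse by elliptic regularity / closed range). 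This establishes the "consequently" clause directly from (\ref{positive-integral-condition}) $\Leftrightarrow c > 0$. Thus the proof structure is: (i) solve the Poisson equation to get $\varphi$ with $\Delta\varphi + 2\lambda_{\mathbb{S}} \equiv c$; (ii) if $c\le 0$ use positivity of $D^2$; (iii) if $c > 0$ feed an eigensection into Theorem \ref{RieSurfaceCase} and extract $\mu\ge c$ via the weighted $L^2$ identity; (iv) deduce the isomorphism statement from $\ker D = 0$ plus self-adjoint elliptic theory.
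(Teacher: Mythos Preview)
Your overall architecture matches the paper's exactly: split on the sign of $c=\frac{2}{\mathrm{Vol}(M)}\int_M\lambda_{\mathbb S}$, dispose of $c\le 0$ by $D^2\ge 0$, and in the case $c>0$ solve $\Delta\varphi+2\lambda_{\mathbb S}=c$ and feed this $\varphi$ into Theorem~\ref{RieSurfaceCase}. The isomorphism clause is then immediate as you say.

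The gap is precisely where you flagged it: your attempt to pass from the weighted solvability estimate to $\lambda_{\min}(D^2)\ge c$ does not close. Working with an eigensection $f$ of $D$ and the unweighted pairing $(f,f)=(Du,f)=(u,Df)$ forces you to compare $\|u\|$ with $\|u\|_\varphi$, which costs the factor $e^{\pm\sup|\varphi|}$ you noticed; and your alternative of plugging $s=f$ into \eqref{weighted-L2-est2} runs into $D_\varphi^\ast f=Df-\nabla\varphi\cdot f$, whose gradient term does \emph{not} integrate away. The paper's device that you are missing is a conjugation: set $\widetilde D=e^{-\varphi/2}\,D\,e^{\varphi/2}$ and $\widetilde u=e^{-\varphi/2}u$. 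Then $\|\widetilde D\widetilde u\|^2/\|\widetilde u\|^2$ is exactly the weighted Rayleigh quotient $\int|Du|^2e^{-\varphi}\big/\int|u|^2e^{-\varphi}$, and since $\widetilde D^2=e^{-\varphi/2}D^2e^{\varphi/2}$ is similar to $D^2$, they share the same spectrum; the bound $\ge c$ transfers with no loss. Equivalently (this is the paper's Remark), choose $s$ so that $e^{-\varphi}s$ is an eigensection of $D$, i.e.\ $D(e^{-\varphi}s)=\lambda e^{-\varphi}s$; then by Lemma~\ref{weight-relation} one has $D_\varphi^\ast s=\lambda s$ on the nose, and \eqref{weighted-L2-est2} reads $\lambda^2\int|s|^2e^{-\varphi}\ge c\int|s|^2e^{-\varphi}$, giving $\lambda^2\ge c$ directly. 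Either formulation makes the gradient terms disappear \emph{by construction} rather than by an integration-by-parts miracle.
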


\begin{proof}
Since $\lambda _{\min }\left( D^{2}\right) \geq 0$, the inequality $\left( %
\ref{eigen-RieSfc}\right) $ is trivial if $\int_{M}\lambda _{\mathbb{S}}\leq
0$. Now let us assume $\int_{M}\lambda _{\mathbb{S}}>0$. The equation $%
\Delta \varphi +2\lambda _{\mathbb{S}}=\frac{2}{\mathrm{Vol}\left( M\right) }%
\int_{M}\lambda _{\mathbb{S}}$ always has a solution $\varphi $, as the
integral of $\lambda _{\mathbb{S}}-\frac{1}{\text{Vol}\left( M\right) }%
\int_{M}\lambda _{\mathbb{S}}$ over $M$ is zero. (In fact, $\Delta \varphi
+2\lambda _{\mathbb{S}}>0$ has a solution $\varphi $ if and only if $%
\int_{M}\lambda _{\mathbb{S}}>0$). Take such $\varphi $ as the weight
function in $\left( \ref{weight-L2-solvability}\right) $. For any $f\in
L^{2}\left( M,\mathbb{S}\right) $, by our condition 
\begin{equation*}
\int_{M}\frac{\left\vert f\right\vert ^{2}}{\Delta \varphi +2\lambda _{%
\mathbb{S}}}e^{-\varphi }=\frac{\mathrm{Vol}\left( M\right) }{%
2\int_{M}\lambda _{\mathbb{S}}}\int_{M}\left\vert f\right\vert
^{2}e^{-\varphi }<\infty .
\end{equation*}%
By Theorem \ref{RieSurfaceCase}, there is a $L^{2}$ section $u$ such that $%
Du=f$. From $\left( \ref{weight-L2-solvability}\right) $ we have%
\begin{equation}
\frac{\int_{M}\left\vert Du\right\vert ^{2}e^{-\varphi }}{\int_{M}\left\vert
u\right\vert ^{2}e^{-\varphi }}\geq \frac{2}{\text{$\mathrm{Vol}$}\left(
M\right) }\int_{M}\lambda _{\mathbb{S}}\text{.}  \label{Reyleigh-weighted}
\end{equation}%
By the Rayleigh quotient, the desired estimate $\lambda ^{2}\geq \frac{2}{%
\text{\textrm{Vol}}\left( M\right) }\int_{M}\lambda _{\mathbb{S}}$ follows,
where $\lambda $ is a square root of $\lambda _{\min }\left( D^{2}\right) $.
(To get rid of the weight $e^{-\varphi }$ in $\left( \ref{Reyleigh-weighted}%
\right) $, we consider the operator $\widetilde{D}=e^{-\frac{\varphi }{2}%
}\circ D\circ e^{\frac{\varphi }{2}}$ and section $\widetilde{u}=e^{-\frac{%
\varphi }{2}}u$, and notice $D^{2}$ and $\widetilde{D}^{2}$ have the same
eigenvalues).
\end{proof}

\bigskip

\begin{remark}
Corollary \ref{eigen-by-curvature-integral} also follows from \bigskip $%
\left( \ref{weighted-L2-est2}\right) $, by taking the same $\varphi $ as
above and the nontrivial section $s$ such that $D\left( e^{-\varphi
}s\right) =\lambda e^{-\varphi }s$.\bigskip\ A different proof was given in
Theorem 2 of \cite{B}.
\end{remark}

\begin{proposition}
\label{Noncpt-2d-mfds}If $M$ is a noncompact $2$-dimensional Riemannian
manifold, then for any $f\in L_{loc}^{2}(M,\mathbb{S})$, there exists a
section $u\in L_{loc}^{2}(M,\mathbb{S})$ such that $Du=f$, where $%
L_{loc}^{2}(M,\mathbb{S})$ is the space of locally square integrable
sections of $\mathbb{S}$.
\end{proposition}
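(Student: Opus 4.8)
The plan is to localize, apply the compact-case solvability from Corollary \ref{eigen-by-curvature-integral} (or directly Theorem \ref{RieSurfaceCase}) on an exhaustion of $M$ by relatively compact domains, and then patch the local solutions into a global one using the fact that $\ker D$ on a surface is large enough to absorb the discrepancies. Concretely, I would first fix an exhaustion $\Omega_0 \subset\subset \Omega_1 \subset\subset \cdots \nearrow M$ by smoothly bounded, relatively compact open sets. On each $\Omega_\nu$ I need a solution of $Du = f$; since $\Omega_\nu$ is noncompact-boundary-ish I cannot quote Theorem \ref{RieSurfaceCase} verbatim, so instead I would embed $\Omega_\nu$ into a slightly larger relatively compact $\Omega_\nu'$ and run the Hörmander-type argument of Proposition \ref{Dirac-solve} with $n=2$: on $\Omega_\nu'$ one can always choose a $C^2$ weight $\varphi_\nu$ with $\Delta\varphi_\nu + 2\lambda_{\mathbb S} \geq 1$ (solve a Poisson-type inequality on the relatively compact set, possible because there is no global obstruction locally), so that (\ref{weighted-L2-est2}) gives the a priori estimate needed for Lemma \ref{Lemma-Hormander}, producing $u_\nu \in L^2_{loc}(\Omega_\nu,\mathbb S)$ with $Du_\nu = f$ on $\Omega_\nu$.

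Next comes the patching. On $\Omega_\nu$, the difference $u_{\nu+1} - u_\nu$ lies in $\ker D \subset L^2_{loc}(\Omega_\nu,\mathbb S)$, which is an infinite-dimensional Fréchet space (elements are "$D$-harmonic" sections, smooth by elliptic regularity). I would build the global solution by a Mittag-Leffler / diagonal correction: inductively modify $u_{\nu+1}$ by a $D$-harmonic section on $\Omega_{\nu+1}$ so that it agrees with the already-constructed section on $\Omega_{\nu-1}$ up to an error controlled in, say, the $L^2(\Omega_{\nu-1})$-norm by $2^{-\nu}$. This uses a Runge-type approximation: $D$-harmonic sections on the larger set $\Omega_{\nu+1}$ are dense (in $L^2_{loc}$ on compact subsets) in $D$-harmonic sections on $\Omega_\nu$ — on an open surface this kind of approximation holds because $D$ is a first-order elliptic operator with the unique continuation property and $M\setminus\Omega_\nu$ has no compact components for a suitable exhaustion. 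The corrected sequence then stabilizes on each fixed $\Omega_k$, and its limit $u \in L^2_{loc}(M,\mathbb S)$ satisfies $Du = f$ globally by the local nature of the equation.

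The main obstacle I anticipate is the Runge-type approximation step for $D$-harmonic sections on an arbitrary noncompact surface, i.e., showing that one can correct $u_{\nu+1}-u_\nu$ by a section in $\ker D$ on the larger domain. This requires either a duality argument (Hahn–Banach plus the $L^2$-existence theorem for the adjoint $D^\ast = D$, using that a functional annihilating all global $D$-harmonic sections must be of the form $D^\ast g$ with $g$ supported in the "hole", forcing it to vanish on the smaller domain) or an appeal to the general theory of elliptic operators on open manifolds; I would choose the exhaustion carefully so that each $M\setminus\Omega_\nu$ has only noncompact components, which is exactly the condition that makes the duality go through. Everything else — the local solvability via weighted estimates, elliptic regularity for smoothness of $\ker D$, the diagonal limit — is routine. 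If one is willing to settle for a softer statement, one can alternatively invoke that $D$ is an elliptic operator of Dirac type on a (not necessarily complete) manifold and $f\in L^2_{loc}$; surjectivity of such operators onto $L^2_{loc}$ on noncompact manifolds is a known consequence of the Malgrange–Lax type theory, but giving the self-contained argument above via Proposition \ref{Dirac-solve} keeps the paper's methods uniform.
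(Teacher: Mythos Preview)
Your approach is workable in principle but takes a substantially more complicated route than the paper, and the Runge-type approximation step you flag as the ``main obstacle'' is genuinely nontrivial to make self-contained here.

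The paper avoids all patching. The key observation you are missing is that a noncompact $2$-dimensional Riemannian manifold always admits a smooth \emph{strictly subharmonic} exhaustion function $\phi$. Composing $\phi$ with a suitable convex increasing function $\kappa$ gives a proper exhaustion $\psi = \kappa\circ\phi$ with $\Delta\psi + 2\lambda_{\mathbb S} \geq 1$ on all of $M$; composing once more with a convex increasing $\chi$ growing fast enough (chosen level-set by level-set so that $\chi(\nu) \geq \nu + \log\int_{\{\psi<\nu+1\}}|f|^2$) produces a single global weight $\varphi = \chi\circ\psi$ with $\Delta\varphi + 2\lambda_{\mathbb S}\geq 1$ and $\int_M \frac{|f|^2}{\Delta\varphi + 2\lambda_{\mathbb S}}e^{-\varphi}<\infty$. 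Theorem~\ref{RieSurfaceCase} then gives the solution in one shot, with the exhaustion-and-weak-limit already baked into Proposition~\ref{Dirac-solve}.

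So the difference is: you solve locally and then need an approximation theorem for $\ker D$ to glue, whereas the paper exploits the special $2$-dimensional fact (abundance of subharmonic functions) to build one weight that makes the whole of $M$ behave like the positive-curvature-integral case. Your method would generalize better to situations where no global subharmonic exhaustion exists, but here it introduces a step (Runge for Dirac-harmonic sections) that is heavier than the problem requires and that you would have to prove from scratch.
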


\begin{proof}
By Theorem \ref{RieSurfaceCase}, it suffices to prove that there is a $%
\varphi \in C^{2}(M)$ such that 
\begin{equation}
\Delta \varphi +2\lambda _{\mathbb{S}}\geq 0\text{ on }M\text{ and}\int_{M}%
\frac{\left\vert f\right\vert ^{2}}{\Delta \varphi +2\lambda _{\mathbb{S}}}%
e^{-\varphi }<\infty .  \label{phi-R-condition}
\end{equation}

We first construct some nonnegative proper exhaustion function $\psi \in
C^{2}(M)$ such that $\Delta \psi +2\lambda _{\mathbb{S}}\geq 1$ on $M$.
Since $M$ is a noncompact $2$-dimensional Riemannian manifold, there always
exists a nonnegative exhaustion function $\phi \in C^{\infty }(M)$ which is
strictly subharmonic. Then, we choose a nonnegative function $\kappa \in
C^{\infty }[0,+\infty )$ such that 
\begin{equation}
\kappa ^{\prime }(t)>0,\kappa ^{\prime \prime }(t)\geq 0\ \mathrm{for}\
t\geq 0,\ \kappa ^{\prime }(\nu )\geq \sup_{\Omega _{\nu +1}\setminus \Omega
_{\nu }}\frac{1-2\lambda _{\mathbb{S}}}{\Delta \phi }\ \mathrm{for}\ \nu
=0,1,2\cdots ,  \label{k-p}
\end{equation}%
where $\Omega _{\nu }:=\{x\in M\ \mid \ \phi (x)<\nu \}(\nu =0,1,2,\cdots )$%
. Set $\psi =\kappa \circ \phi $, then 
\begin{equation*}
\Delta \psi =\kappa ^{\prime }\circ \phi \cdot \Delta \phi +\kappa ^{\prime
\prime }\circ \phi \cdot |\nabla \phi |^{2}\geq \kappa ^{\prime }\circ \phi
\cdot \Delta \phi .
\end{equation*}%
Consequently, by the monotonicity of $\kappa ^{\prime }$ and $\left( \ref%
{k-p}\right) $, we obtain $\Delta \psi +2\lambda _{\mathbb{S}}\geq 1$ on $M$%
. Since $\kappa (t)\rightarrow +\infty $ as $t\rightarrow +\infty $, $\psi
=\kappa \circ \phi $ is also an exhaustion function.

Now we construct the desired function $\varphi $ satisfying $\left( \ref%
{phi-R-condition}\right) $. If we set $\Omega _{\nu }=\psi ^{-1}(-\infty
,\nu ),\nu =0,1,2,\cdots ,$ then $\varnothing =\Omega _{0}\subset \subset
\Omega _{1}\subset \subset \Omega _{2}\subset \subset \cdots \nearrow M$.
Let $\chi \in C^{\infty }[0,+\infty )$ such that 
\begin{equation*}
\chi \left( \nu \right) \geq \nu +\log \int_{\Omega _{\nu +1}}\left\vert
f\right\vert ^{2}\text{ }\left( \nu =0,1,2,\cdots \right) ,\text{ }\chi
^{\prime }\geq 1,\text{ }\chi ^{\prime \prime }\geq 0.
\end{equation*}%
Define $\varphi =\chi \circ \psi $, then we have

\begin{equation*}
\Delta \varphi =\chi ^{\prime }\circ \psi \cdot \Delta \psi +\chi ^{\prime
\prime }\circ \psi \cdot \left\vert \nabla \psi \right\vert ^{2}\geq \Delta
\psi \geq 1-2\lambda _{\mathbb{S}},
\end{equation*}%
and%
\begin{eqnarray}
\int_{M}\frac{\left\vert f\right\vert ^{2}}{\Delta \varphi +2\lambda _{%
\mathbb{S}}}e^{-\varphi } &=&\sum_{\nu \geq 0}\int_{\Omega _{\nu
+1}\setminus \Omega _{\nu }}\frac{\left\vert f\right\vert ^{2}}{\Delta
\varphi +2\lambda _{\mathbb{S}}}e^{-\varphi }  \notag \\
&\leq &\sum_{\nu \geq 0}e^{-\chi (\nu )}\int_{\Omega _{\nu +1}\setminus
\Omega _{\nu }}\left\vert f\right\vert ^{2}  \notag \\
&\leq &\sum_{\nu \geq 0}e^{-\nu }<\infty .\   \notag
\end{eqnarray}%
Hence, we have constructed the desired the weight function $\varphi $
satisfying $\left( \ref{phi-R-condition}\right) $ and the proof is therefore
complete.
\end{proof}

\begin{corollary}
The Poisson equation $\Delta _{d}u=f$ is always solvable on noncompact $2$%
-dimensional Riemannian manifolds (we do not require $M$ is orientable).
\end{corollary}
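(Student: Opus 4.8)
The plan is to realize the scalar Laplacian $\Delta_d$ as (essentially) a Dirac operator squared on a suitable Dirac bundle, so that solvability of $\Delta_d u = f$ on a noncompact surface follows from Proposition \ref{Noncpt-2d-mfds}. First I would take $\mathbb{S} = \wedge^{0}(M) \oplus \wedge^{1}(M)$, the bundle of differential forms of even and odd degree (or, if $M$ is not orientable, one still has the full Clifford bundle $C\ell(M,g)$ acting on $\wedge^{*}(M)$ by $e\cdot = e\wedge - \iota_e$). This is a Dirac bundle in the sense of Definition \ref{Dirac-bdl}, and its Dirac operator is $D = d + d^{*}$ acting on $\wedge^{*}(M)$; on functions composed with the projection back to functions one recovers $D^2 = (d+d^{*})^2 = \Delta_d$ on $\wedge^{0}$ (the Hodge Laplacian, which on functions is the Laplace–Beltrami operator up to sign convention). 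The point is that $D$ preserves no grading issue relevant here: $\Delta_d = D^2$ restricted to the even part, or more simply $\Delta_d = D \circ D$ on functions since $d^{*}$ vanishes on $0$-forms and $d$ of a function lands in $1$-forms.

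Next I would apply Proposition \ref{Noncpt-2d-mfds} to this Dirac bundle. Given $f \in L^2_{loc}(M)$ a function, view it as a section of $\wedge^0 \subset \mathbb{S}$; the proposition produces $v \in L^2_{loc}(M,\mathbb{S})$ with $Dv = f$. Then apply the proposition a second time to the equation $Dw = v$ (again $v \in L^2_{loc}(M,\mathbb{S})$), obtaining $w \in L^2_{loc}(M,\mathbb{S})$ with $Dw = v$, hence $D^2 w = f$. I would then need to check that the scalar (function) component $u$ of $w$ satisfies $\Delta_d u = f$: since $f$ is a $0$-form, and $D^2 = (d+d^*)^2$ preserves form-degree, the component of $D^2 w$ in degree $0$ is $\Delta_d(w_0)$ where $w_0 \in \wedge^0$ is the function part of $w$, and this equals $f$; the higher-degree components give $\Delta_d$ applied to the higher-degree parts equal to $0$, which we simply discard. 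So $u := w_0$ solves $\Delta_d u = f$.

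There is a subtlety I should address: in the non-orientable case $\wedge^{0,*}$ does not make sense, but the construction above uses only $C\ell(M,g)$ acting on the real exterior algebra $\wedge^{*}(T^{*}M)$, which is always a Dirac bundle regardless of orientability — this is exactly why the parenthetical remark "(we do not require $M$ is orientable)" is included. I would spell out that $\mathbb{S} = \wedge^{*}(T^{*}M)$ with the Levi-Civita connection and the standard metric satisfies \eqref{orthogonal} and \eqref{Leibniz}, and that $D = d + d^{*}$ is its Dirac operator by the classical identity (Lawson–Michelsohn). The only real care needed is the bookkeeping that applying the existence result twice and then projecting to $0$-forms genuinely inverts $\Delta_d$ on functions.

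The main obstacle — really the only non-formal point — is confirming that the twice-applied solution has its $0$-form component solving the scalar equation, i.e. that degrees do not get mixed in a way that obstructs extracting $u$. Because $D^2 = \Delta_d$ commutes with the degree decomposition $\wedge^{*} = \bigoplus_k \wedge^k$, this is automatic: write $w = \sum_k w_k$, then $f = D^2 w = \sum_k \Delta_d w_k$ and matching degrees gives $\Delta_d w_0 = f$. Hence $u = w_0$ works, and the proof is complete by invoking Proposition \ref{Noncpt-2d-mfds} twice.
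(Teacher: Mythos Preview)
Your proposal is correct and follows essentially the same approach as the paper, which also takes $\mathbb{S}=\Lambda^{*}(T^{*}M)$ with $D=d+\delta$, notes $D^{2}=\Delta_{d}$, and invokes Proposition~\ref{Noncpt-2d-mfds}; you are simply more explicit than the paper about applying the proposition twice and projecting onto the degree-$0$ component. One minor slip to fix: your opening choice $\mathbb{S}=\wedge^{0}\oplus\wedge^{1}$ is not a Clifford module in dimension $2$ (Clifford multiplication by a vector sends $\wedge^{1}$ into $\wedge^{0}\oplus\wedge^{2}$), but you correctly switch to the full $\wedge^{*}(T^{*}M)$ thereafter, which is exactly what the paper uses.
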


\begin{proof}
For any $2$-dimensional Riemannian manifold $(M,g)$, the vector bundle $%
\Lambda ^{\ast }M:=\displaystyle{\bigoplus_{\ell =0}^{2}}\Lambda ^{\ell
}(T^{\ast }M)$ has a natural structure of a Dirac bundle over $(M,g)$, and
the associated Dirac operator is given by $D=d+\delta $, where $\delta $ is
the codifferential operator. Since $D^{2}=(d+\delta )^{2}=\Delta _{d}$ (the
Hodge Laplacian operator), by the above proposition, the corollary follows.
\end{proof}

\bigskip \bigskip By a suitable choice of the weight function in our
estimate (\ref{weighted-L2-est1}) , we can give a simple proof of B\"{a}r's

\begin{theorem}
(Theorem 3 \cite{B})\label{first-eigen-est} Let $\mathbb{S}$ be a Dirac
bundle over a compact $n$-dimensional Riemannian manifold $\left( M,g\right) 
$ without boundary, and $D$ be the Dirac operator, $n\geq 2$. Then 
\begin{equation*}
\lambda _{\mathrm{min}}(D^{2})\geq \frac{n}{n-1}\lambda _{\mathrm{min}}(%
\mathbb{L})
\end{equation*}%
where $\lambda _{\mathrm{min}}(\cdot )$ means the first eigenvalue, $\mathbb{%
L}=-\frac{n-1}{n-2}\Delta +\lambda _{\mathbb{S}}$ if $n\geq 3$, and $\mathbb{%
L}=-\frac{1}{2}\Delta +\lambda _{\mathbb{S}}$ if $n=2$.
\end{theorem}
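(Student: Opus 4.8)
The plan is to feed a carefully chosen test section into the weighted inequality (\ref{weighted-L2-est1}) of Proposition \ref{weighted-L2-prop}: a power of the ground state of the comparison operator $\mathbb{L}$ multiplied by a first eigensection of $D$, with the weight $\varphi$ also built from that ground state. The exponents are to be chosen so that, once the two eigenvalue equations are inserted, every gradient‑squared term produced on the left of (\ref{weighted-L2-est1}) is matched by one on the right and cancels, leaving exactly the claimed inequality.

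First I would set up the comparison objects. Write $\mathbb{L}=-\tau\Delta+\lambda_{\mathbb{S}}$ with $\tau=\frac{n-1}{n-2}$ if $n\geq 3$ and $\tau=\frac12$ if $n=2$. Since $M$ is closed and $\mathbb{L}$ is a Schr\"odinger operator with bounded potential ($\lambda_{\mathbb{S}}$ is Lipschitz), $\mu:=\lambda_{\mathrm{min}}(\mathbb{L})$ is an eigenvalue, attained by a smooth eigenfunction that may be taken strictly positive, say $w>0$ (replace the Rayleigh minimizer by $|w|$ and use the maximum principle); hence $\tau\Delta w=(\lambda_{\mathbb{S}}-\mu)w$. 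Likewise $\lambda_{\mathrm{min}}(D^2)$ is attained, and I fix a smooth $v\not\equiv 0$ with $Dv=\lambda v$, where $\lambda$ is a square root of $\lambda_{\mathrm{min}}(D^2)$ (if $\lambda_{\mathrm{min}}(D^2)=0$, take $0\neq v\in\ker D$ and $\lambda=0$); such $v$ is smooth by ellipticity.

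Next I would apply Proposition \ref{weighted-L2-prop} with $\varphi=-2\tau\log w$ and $s=w^{1-2\tau}v$; both are smooth, in particular $C^2$, hence admissible since $M$ is closed. Because $e^{-\varphi}s=wv$, Lemma \ref{weight-relation} together with $D(wv)=\nabla w\cdot v+w\,Dv$ (immediate from (\ref{DiracOper})) gives $D_{\varphi}^{\ast}s=w^{-2\tau}\,\nabla w\cdot v+\lambda s$, and $\nabla\varphi\cdot s=-2\tau\,w^{-2\tau}\,\nabla w\cdot v$. By (\ref{orthogonal}) one has $|\nabla w\cdot v|=|\nabla w|\,|v|$ and $\mathrm{Re}\langle\nabla w\cdot v,v\rangle=0$, so all cross terms drop and a short computation gives $|D_{\varphi}^{\ast}s|^2e^{-\varphi}=(|\nabla w|^2/w^2+\lambda^2)\,w^{2-2\tau}|v|^2$, $\mathrm{Re}\langle\nabla\varphi\cdot s,D_{\varphi}^{\ast}s\rangle e^{-\varphi}=-2\tau\,(|\nabla w|^2/w^2)\,w^{2-2\tau}|v|^2$, and $|s|^2e^{-\varphi}=w^{2-2\tau}|v|^2$. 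Substituting these into (\ref{weighted-L2-est1}), then expanding $\varphi=-2\tau\log w$ and using $\tau\Delta w=(\lambda_{\mathbb{S}}-\mu)w$, the left side of (\ref{weighted-L2-est1}) becomes $\frac{n-1}{n}\int_M(\lambda^2-|\nabla w|^2/w^2)\,w^{2-2\tau}|v|^2$, while the integrand on the right collapses — the $\lambda_{\mathbb{S}}$-terms cancelling against $-\tau\Delta w/w$ — to $\mu-\frac{n-1}{n}|\nabla w|^2/w^2$. The $|\nabla w|^2/w^2$-contributions on the two sides therefore coincide and cancel, leaving $\frac{n-1}{n}\lambda^2\int_M w^{2-2\tau}|v|^2\geq\mu\int_M w^{2-2\tau}|v|^2$; since $w>0$ and $v\not\equiv 0$ the integral is positive, so $\lambda_{\mathrm{min}}(D^2)=\lambda^2\geq\frac{n}{n-1}\mu$. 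For $n=2$ the term $\frac{n-2}{2n}|\nabla\varphi|^2$ and the cross term vanish, $s=v$, and running the same computation through (\ref{weighted-L2-est2}) gives $\lambda^2\geq 2\mu=\frac{n}{n-1}\mu$ (one can even simply discard the nonnegative $|\nabla w|^2/w^2$-term).

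The work is not in the computation but in choosing the pair $(\varphi,s)$, which is the delicate point. The exponent $-2\tau$ is forced by requiring that, once the eigen-equation for $w$ is used, the coefficient of $\lambda_{\mathbb{S}}$ on the right of (\ref{weighted-L2-est1}) vanish, so that $\lambda_{\mathbb{S}}$ is traded for $\mu$; the twisting exponent $1-2\tau$ on $v$ is then pinned down by requiring that the leftover $|\nabla w|^2/w^2$-terms produced on the two sides agree. That last requirement is a quadratic condition on the twisting exponent whose discriminant must vanish, and it does so precisely for $\tau=\frac{n-1}{n-2}$ — which is exactly why B\"ar's operator $\mathbb{L}$ and the sharp constant $\frac{n}{n-1}$ come out. (It would be natural but wrong to take instead $s=e^{\varphi}v$, a weighted eigensection of $D_{\varphi}^{\ast}$: that annihilates the cross term but leaves an uncancelled negative $|\nabla w|^2/w^2$-term and only yields the weaker operator $-\frac{n}{2(n-2)}\Delta+\lambda_{\mathbb{S}}$.)
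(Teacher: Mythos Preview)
Your proof is correct and is essentially the paper's own argument. Your weight $\varphi=-2\tau\log w$ coincides with the paper's $\varphi=-\frac{2(n-1)}{n-2}\log\psi$ (resp.\ $-\log\psi$), and your section $s=w^{1-2\tau}v$ is exactly the paper's $s=e^{a\varphi}v$ with $a=\frac{n}{2(n-1)}$ (resp.\ $a=0$), since $a\varphi=-\frac{n}{n-2}\log w=(1-2\tau)\log w$; the paper just phrases the choice implicitly via $D(e^{-a\varphi}s)=\lambda e^{-a\varphi}s$ and leaves the cancellation of the $|\nabla\varphi|^2$-terms to the reader, whereas you carry it out explicitly.
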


\begin{proof}
For any $\varphi \in C^{\infty }(M)$, we can choose a non-trivial section $%
s\in \Gamma (M,\mathbb{S})$ such that 
\begin{equation*}
D(e^{-a\varphi }s)=\lambda e^{-a\varphi }s
\end{equation*}%
where $\lambda $ is a square root of $\lambda _{\mathrm{min}}(D^{2})$ and $%
a=0$ if $n=2$, $a=\frac{n}{2(n-1)}$ if $n\geq 3.$ By Lemma \ref%
{weight-relation}, we know 
\begin{equation*}
D_{\varphi }^{\ast }s=D_{a\varphi }^{\ast }s+(a-1)\nabla \varphi \cdot
s=\lambda s+(a-1)\nabla \varphi \cdot s.
\end{equation*}%
Substituting the above identity into (\ref{weighted-L2-est1}) and noticing $%
\mathrm{Re}\left\langle s,\nabla \varphi \cdot s\right\rangle =0$, we have 
\begin{equation}
\lambda ^{2}\int_{M}|s|^{2}e^{-\varphi }\geq \int_{M}\left( \Delta \varphi
-|\nabla \varphi |^{2}+2\lambda _{\mathbb{S}}\right) |s|^{2}e^{-\varphi },\ 
\mathrm{if}\ n=2,  \label{n = 2}
\end{equation}%
and 
\begin{equation}
\lambda ^{2}\int_{M}|s|^{2}e^{-\varphi }\geq \frac{n}{n-1}\int_{M}\left( 
\frac{1}{2}\Delta \varphi -\frac{n-2}{4(n-1)}|\nabla \varphi |^{2}+\lambda _{%
\mathbb{S}}\right) |s|^{2}e^{-\varphi },\ \mathrm{if}\ n\geq 3.
\label{n geq 3}
\end{equation}

Let $\psi \in C^{\infty }(M)$ be an eigenfunction 
\begin{equation*}
\mathbb{L}\psi =\lambda _{\mathrm{min}}(\mathbb{L})\psi .
\end{equation*}%
Without loss of generality, we may assume $\psi >0$ on $M$. Set 
\begin{equation*}
\varphi =-\log \psi \ \mathrm{if}\ n=2\text{, and }\varphi =-\frac{2(n-1)}{%
n-2}\log \psi \ \mathrm{if}\ n\geq 3,
\end{equation*}%
then Theorem \ref{first-eigen-est} follows form (\ref{n = 2}) and (\ref{n
geq 3}).
\end{proof}

\bigskip

An example of a Dirac bundle over a $3$-manifold is the normal bundle of an
instanton in a $G_{2}$ manifold. In Physics, $G_{2}$-manifolds are internal
spaces for compactification in M-theory in eleven dimensional spacetimes,
similar to the role of Calabi-Yau threefolds in string theory. Counting
instantons in $G_{2}$ manifolds is similar to counting holomorphic curves in
Calabi-Yau threefolds.

\begin{definition}
\bigskip A $G_{2}$ manifold is a $7$-dimensional Riemannian manifolds $%
\left( M,g\right) $ equipped with a parallel cross product $\times $. An
instanton (or associative submanifold) $A$ is a $3$-dimensional submanifold
whose tangent spaces are closed under the cross product.
\end{definition}

Let $N_{A/M}$ be the normal bundle of $A$ in $M$. Regarding $N_{A/M}$ as a
left Clifford module over $A$ with the $G_{2}$ cross product $\times $ as
the Clifford multiplication, it is a\emph{\ twisted spinor bundle }over $A$,
with the normal connection $\nabla ^{\perp }$ inherited from the Levi-Civita
connection $\nabla $ on $M$ (Section 5 \cite{M}). All $3$-manifolds nearby $%
A $ can be parameterized by sections $V$ of $N_{A/M}$.

Given the cross product $\times $, one can defined a $TM$-valued $3$-form $%
\tau $ on $M$ as 
\begin{equation*}
\tau (u,v,w)=-u\times \left( v\times w\right) -g(u,v)w+g(u,w)v,
\end{equation*}
for $u,v$ and $w\in TM$. Then $A$ is an instanton if and only if $\tau
|_{A}=0$ (c.f. \cite{HL}). Using $\tau $ McLean (Section 5 \cite{M}) defined
a nonlinear function 
\begin{equation*}
F:C^{1,\alpha }\left( A,N_{A/M}\right) \rightarrow C^{\alpha }\left(
A,N_{A/M}\right) \text{ }\left( 0<\alpha <1\right)
\end{equation*}%
such that instantons nearby $A$ correspond to the zeros of $F$ (the choice
of the \emph{Schauder} functional analysis setting over the $W^{1,p}$
setting is necessary, due to the \emph{cubic nonlinearity} of $\tau $ and $F$%
). He computed 
\begin{equation}
\left. \frac{d}{dt}\right\vert _{t=0}F\left( tV\right) =DV\text{,}
\label{linearization}
\end{equation}%
where $V\in $ $A,N_{A/M}$, and $D$ is the \emph{twisted Dirac operator} on $%
N_{A/M}$ over $A$. Then he proved

\begin{theorem}
\label{deform-instanton}(Theorem 5-2 \cite{M}) Infinitesimal deformations of
instantons at $A$ are parametrized by the space of harmonic twisted spinors
on $A$, i.e. the kernel of $D$.
\end{theorem}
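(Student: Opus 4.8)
The plan is to prove Theorem \ref{deform-instanton} by identifying infinitesimal deformations of the instanton $A$ with the kernel of the linearized operator $D$, and then showing that $\ker D$ coincides with the space of harmonic twisted spinors on $A$. First I would recall McLean's setup: nearby $3$-manifolds are graphs of sections $V$ of the normal bundle $N_{A/M}$, and $A'$ is an instanton precisely when $F(V) = 0$, where $F: C^{1,\alpha}(A, N_{A/M}) \to C^{\alpha}(A, N_{A/M})$ is the nonlinear function built from $\tau$. An \emph{infinitesimal deformation} is by definition an element of the kernel of the linearization of $F$ at $0$; by McLean's computation $\left.\frac{d}{dt}\right\vert_{t=0} F(tV) = DV$ in $(\ref{linearization})$, this kernel is exactly $\ker D$ where $D$ is the twisted Dirac operator on $N_{A/M}$. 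So the content of the theorem is the identification $\ker D = \{\text{harmonic twisted spinors}\}$.

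The key step is then to observe that, since $N_{A/M}$ is a genuine Dirac bundle over the closed $3$-manifold $A$ (with the $G_2$ cross product as Clifford multiplication, the normal metric, and the normal connection $\nabla^\perp$, which one checks satisfies the compatibility conditions $(\ref{orthogonal})$ and $(\ref{Leibniz})$), the operator $D$ is a formally self-adjoint elliptic operator on a compact manifold without boundary. For such operators one has the standard fact that $DV = 0$ if and only if $D^2 V = 0$: indeed $\int_A |DV|^2 = \int_A \langle D^2 V, V\rangle$ by integration by parts (no boundary terms since $A$ is closed), so $D^2 V = 0$ forces $DV = 0$, and the converse is trivial. By definition a \emph{harmonic twisted spinor} is a section in the kernel of $D^2 = \nabla^* \nabla + \mathfrak{R}$ (equivalently, annihilated by the twisted Dirac Laplacian), so $\ker D$ is precisely this space. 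Combining with the previous paragraph gives the theorem.

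I would also note, for completeness, the analytic point underlying McLean's passage from $(\ref{linearization})$ to a genuine parametrization: since $D$ is elliptic and self-adjoint on a compact manifold, it is Fredholm of index zero, so $\ker D$ is finite-dimensional and one can apply the implicit function theorem (in the Schauder setting forced by the cubic nonlinearity of $\tau$ and $F$) to identify the local moduli space's tangent space with $\ker D$. This is really part of McLean's Theorem 5-2, so I would simply cite \cite{M} for it rather than reprove it.

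The main obstacle, such as it is, is purely bookkeeping: one must verify that the normal bundle $N_{A/M}$ with the $G_2$ cross product really is a Dirac bundle in the sense of the definition above — that is, checking the metric compatibility $\langle e \cdot s, e \cdot s'\rangle = \langle s, s'\rangle$ for unit $e$ and the Leibniz rule $\nabla^\perp(V \cdot s) = (\nabla V) \cdot s + V \cdot (\nabla^\perp s)$ for the normal connection. These follow from the parallelism of the cross product and the properties of the Levi-Civita connection on the $G_2$ manifold $M$, and are carried out in Section 5 of \cite{M}; the remaining argument is then the one-line self-adjointness observation above. So the proof is short and the "hard part" is really just assembling the already-known ingredients correctly.
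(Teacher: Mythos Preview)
The paper does not give its own proof of this theorem: it is quoted verbatim as McLean's Theorem~5-2 from \cite{M}, and the only further comment is the pointer ``More detailed exposition of McLean's proof can be found in Theorem 9 and Section 4.2 of \cite{LWZ}.'' So there is no in-paper argument to compare your proposal against.

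That said, your reconstruction is essentially McLean's argument and is correct. One minor remark: the theorem statement already \emph{defines} harmonic twisted spinors as the kernel of $D$ (``the space of harmonic twisted spinors on $A$, i.e.\ the kernel of $D$''), so your detour through the identity $\ker D=\ker D^{2}$ via self-adjointness, while correct, is not actually needed --- the substantive content of the theorem is the identification of infinitesimal deformations with $\ker D$, which is precisely the linearization computation $(\ref{linearization})$ you correctly attribute to McLean. Your closing paragraph about the Dirac-bundle verifications and the implicit function theorem in the Schauder setting accurately reflects where the real work lies, and your decision to cite \cite{M} for those checks is exactly what the paper itself does.
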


\bigskip More detailed exposition of McLean's proof can be found in Theorem
9 and Section 4.2 of \cite{LWZ}.

We relate Theorem \ref{first-eigen-est} to \emph{rigidity }of instantons in $%
G_{2}$ manifolds, i.e. situation that the moduli space of instantons near $A$
is a zero dimensional smooth manifold.

\begin{corollary}
\label{vanishing-rigid}If an instanton $A$ is compact and $\lambda _{\min
}\left( \mathbb{L}\right) >0$, then $A$ is rigid. Here $\mathbb{L=-}2\Delta
_{\left( A,g\right) }+\lambda _{N_{A/M}}$, and $\Delta _{\left( A,g\right) }$
is the Laplace-Beltrami operator on $A$ with the induced metric from $\left(
M,g\right) $.
\end{corollary}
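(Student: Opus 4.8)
The plan is to combine McLean's identification of the deformation space (Theorem \ref{deform-instanton}) with the eigenvalue bound of Theorem \ref{first-eigen-est}, specialized to the case $\mathbb{S} = N_{A/M}$ viewed as a twisted spinor bundle over the $3$-manifold $A$. First I would recall that, by Theorem \ref{deform-instanton}, the space of infinitesimal deformations of the instanton at $A$ is $\ker D$, where $D$ is the twisted Dirac operator on $N_{A/M}$. Hence $A$ is rigid (the moduli space near $A$ is a smooth zero-dimensional manifold) precisely when $\ker D = \{0\}$, i.e.\ when $\lambda_{\mathrm{min}}(D^2) > 0$, together with the (standard) fact that McLean's nonlinear map $F$ has surjective linearization under the same hypothesis, so the implicit function theorem applies and the zero set of $F$ near $A$ is a manifold of dimension $\dim\ker D = 0$.

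Next I would apply Theorem \ref{first-eigen-est} with $n = 3$. Since $A$ is compact without boundary and $3$-dimensional, that theorem gives
\begin{equation*}
\lambda_{\mathrm{min}}(D^2) \geq \frac{3}{2}\,\lambda_{\mathrm{min}}(\mathbb{L}),
\end{equation*}
where, taking $n=3$ in the definition $\mathbb{L} = -\frac{n-1}{n-2}\Delta + \lambda_{\mathbb{S}}$, we get $\mathbb{L} = -2\Delta_{(A,g)} + \lambda_{N_{A/M}}$, exactly the operator in the statement. Therefore the hypothesis $\lambda_{\mathrm{min}}(\mathbb{L}) > 0$ forces $\lambda_{\mathrm{min}}(D^2) > 0$, hence $\ker D = \{0\}$, and so there are no nonzero infinitesimal deformations of $A$.

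Finally I would translate the vanishing $\ker D = \{0\}$ into the geometric rigidity statement. By Theorem \ref{first-eigen-est} and elliptic theory on the compact manifold $A$, $D$ has closed range and, being formally self-adjoint, is surjective onto $C^{\alpha}(A, N_{A/M})$ once $\ker D = 0$; transporting this to the Hölder setting in which McLean's $F$ is defined, the linearization $\left.\frac{d}{dt}\right|_{t=0}F(tV) = DV$ is an isomorphism, so the implicit function theorem shows the zero set of $F$ near $A$ — i.e.\ the set of nearby instantons — is a single point (up to the identification of nearby $3$-manifolds with sections of $N_{A/M}$), which is the definition of $A$ being rigid. The one point needing care, and the main obstacle, is the passage between functional-analytic settings: Theorem \ref{first-eigen-est} and the Bochner machinery are naturally stated in $L^2$, whereas McLean's framework is Schauder ($C^{1,\alpha} \to C^{\alpha}$) because of the cubic nonlinearity of $\tau$; one must invoke standard elliptic regularity for the first-order operator $D$ on the compact manifold $A$ to upgrade the $L^2$-surjectivity to surjectivity in Hölder spaces, and this is routine but should be noted.
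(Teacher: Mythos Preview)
Your proposal is correct and follows essentially the same approach as the paper: apply Theorem \ref{first-eigen-est} with $n=3$ and $\mathbb{S}=N_{A/M}$ to conclude $\ker D=\mathrm{coker}\,D=0$ in $L^2$, then upgrade surjectivity to the Schauder setting and invoke the implicit function theorem for McLean's map $F$. The paper's only additional specificity is in the passage you flagged as needing care: it routes the $L^2\to C^{\alpha}$ upgrade explicitly through $W^{1,p}\to L^p$ surjectivity for $p>3$ (via interpolation) followed by Sobolev embedding $W^{1,p}\hookrightarrow C^0$ and the Schauder estimate, rather than citing elliptic regularity in one step.
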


\begin{proof}
Let $\mathbb{S}$ be $N_{A/M}$ and $D$ be the twisted Dirac operator in
Theorem \ref{first-eigen-est}. By our condition the kernel and cokernel of $%
D:W^{1,2}\left( A,N_{A/M}\right) \rightarrow L^{2}\left( A,N_{A/M}\right) $
vanish (note $D$ is self-adjoint). A standard interpolation argument implies
that $D:W^{1,p}\left( A,N_{A/M}\right) \rightarrow L^{p}\left(
A,N_{A/M}\right) $ ($p>3$) is surjective (e.g. estimates between (36)$\sim $%
(37) in \cite{LWZ}), which in turn implies that $D:$ $C^{1,\alpha }\left(
A,N_{A/M}\right) \rightarrow C^{\alpha }\left( A,N_{A/M}\right) $ is
surjective by the Schauder estimate of $D$ and Sobolev embedding $%
W^{1,p}\hookrightarrow C^{0}$. By the implicit function theorem for the
nonlinear function $F$, the moduli space $F^{-1}\left( 0\right) $ near $A$
is a zero dimensional smooth manifold, and $A$ is rigid.
\end{proof}

Clearly if $\lambda _{N_{A/M}}>0$ everywhere on $M$, then $\lambda _{\min
}\left( \mathbb{L}\right) >0$, but not vice versa. So Corollary \ref%
{vanishing-rigid} provides a potentially weaker condition to guarantee the
rigidity of $A$.\bigskip\ The computation of $\mathfrak{R}_{N_{A/M}}$ in
terms of the curvature of $M$ and the second fundamental form of $A$ can be
found in Section 5.3 of \cite{G}.

\section{ $\mathbb{Z}_{2}$-graded Dirac operators\label{Z2-Dirac}}

\bigskip The solvability of the \textquotedblleft half\textquotedblright\
Dirac equation $D^{\pm }u=f$ \ is of interest for several reasons: e.g. in
quaternionic analysis, the correct generalization of analytical functions on 
$\mathbb{C}$ are solutions of $D^{+}u=0$ on $\mathbb{H}$, as requiring $u$
to be quaternion differentiable only yields linear functions; $D^{\pm }$ may
have nonzero Fredholm index to produce nontrivial invariants; $D^{\pm }$
arises as the linearized operator (modulo zeroth order terms) in many moduli
problems, like those of $J$-holomorphic curves and solutions of the
Seiberg-Witten equation. As we will see, consideration of the $\mathbb{Z}%
_{2} $-grading also improves eigenvalue estimates of the Dirac operator $D$
on even dimensional manifolds.

Our main technical tool, Proposition \ref{weighted-L2-prop}, extends
immediately to $D^{\pm }$:

\begin{proposition}
\label{weighted-egen-pm}For any smooth section $s$ of $\mathbb{S}^{\mp }$
with compact support and any $C^{2}$ function $\varphi :M\rightarrow \mathbb{%
R}$, we have 
\begin{eqnarray*}
&&\frac{n-1}{n}\int_{M}\left\vert (D^{\pm })_{\varphi }^{\ast }s\right\vert
^{2}e^{-\varphi }+\frac{n-2}{n}\mathrm{Re}\int_{M}\left\langle \nabla
\varphi \cdot s,(D^{\pm })_{\varphi }^{\ast }s\right\rangle e^{-\varphi } \\
&\geq &\int_{M}\left[ \frac{1}{2}\Delta \varphi -\left( \frac{1}{2}-\frac{1}{%
n}\right) \left\vert \nabla \varphi \right\vert ^{2}+\lambda _{\mathbb{%
S^{\mp }}}\right] \left\vert s\right\vert ^{2}e^{-\varphi }.
\end{eqnarray*}%
and 
\begin{equation*}
\int_{M}\left\vert (D^{\pm })_{\varphi }^{\ast }s\right\vert ^{2}e^{-\varphi
}\geq C\int_{M}\left[ \Delta \varphi -\left( 1-\frac{2}{n}\right) \left( 1+%
\frac{1}{\varepsilon }\right) \left\vert \nabla \varphi \right\vert
^{2}+2\lambda _{\mathbb{S^{\mp }}}\right] \left\vert s\right\vert
^{2}e^{-\varphi }.
\end{equation*}%
where $C$ is the constant in Proposition \ref{weighted-L2-prop}.
\end{proposition}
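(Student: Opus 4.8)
The plan is to mimic the proof of Proposition \ref{weighted-L2-prop} almost verbatim, since the $\mathbb{Z}_{2}$-grading does not affect the structure of the Bochner argument at all --- it only tracks which summand $\mathbb{S}^{+}$ or $\mathbb{S}^{-}$ the section lives in. The key observation is that if $s\in\Gamma(M,\mathbb{S}^{\mp})$ with compact support, then $D s = D^{\pm} s\in\Gamma(M,\mathbb{S}^{\pm})$ because of the off-diagonal form $\left(\ref{pos-neg-Dirac}\right)$ of $D$, and similarly $D_{\varphi}^{\ast} s = (D^{\pm})_{\varphi}^{\ast} s$ by Lemma \ref{weight-relation} (Clifford multiplication by $\nabla\varphi$ sends $\mathbb{S}^{\mp}$ to $\mathbb{S}^{\pm}$, and $(D^{\pm})_{\varphi}^{\ast}=(D^{\mp})$ twisted by the weight, which is exactly the restriction of $D_{\varphi}^{\ast}$ to sections of $\mathbb{S}^{\mp}$). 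Moreover, by $\left(\ref{curvature-pm}\right)$ the Bochner term $\mathfrak{R}$ restricts to $\mathfrak{R}^{\mp}$ on $\mathbb{S}^{\mp}$, so $\langle s,\mathfrak{R}s\rangle = \langle s,\mathfrak{R}^{\mp}s\rangle\geq \lambda_{\mathbb{S}^{\mp}}|s|^{2}$ pointwise, where $\lambda_{\mathbb{S}^{\mp}}$ is the smallest eigenvalue of $\mathfrak{R}^{\mp}$ as in $\left(\ref{first-eigen-fcn}\right)$.

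Concretely, I would first record the graded Bochner identity: for $s\in\Gamma(M,\mathbb{S}^{\mp})$ with compact support,
\begin{equation*}
\int_{M}\left\vert D^{\pm}s\right\vert^{2}=\int_{M}\left(\left\vert\nabla s\right\vert^{2}+\langle s,\mathfrak{R}^{\mp}s\rangle\right),
\end{equation*}
which is just $\left(\ref{L2-identity}\right)$ restricted to the subbundle $\mathbb{S}^{\mp}$ (the subbundle is parallel, so $\nabla s$ stays in $T^{\ast}M\otimes\mathbb{S}^{\mp}$ and integration by parts is unchanged). Then I would carry $\sigma:=e^{-\varphi/2}s$ through the identical chain of computations $\left(\ref{one}\right)$--$\left(\ref{Five}\right)$: the expansion of $\int_{M}|D^{\pm}_{\varphi}{}^{\ast}s|^{2}e^{-\varphi}$, the rewriting of $\int_{M}|\nabla\sigma|^{2}$ via $\left(\ref{two}\right)$--$\left(\ref{four}\right)$, the Cauchy--Schwarz bound $|\nabla s|^{2}\geq\frac{1}{n}|D^{\pm}s|^{2}$ (still valid since $D^{\pm}s=\sum_i e_i\cdot\nabla_{e_i}s$ is the full contraction), and the weight-adjustment identity from Lemma \ref{weight-relation}. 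At the step where $\langle s,\mathfrak{R}s\rangle$ appeared one now writes $\langle s,\mathfrak{R}^{\mp}s\rangle\geq\lambda_{\mathbb{S}^{\mp}}|s|^{2}$, and everything else is formally unchanged. This yields the first displayed inequality; feeding in $\varepsilon>0$ via Cauchy--Schwarz on the cross term $\mathrm{Re}\int_{M}\langle\nabla\varphi\cdot s,(D^{\pm})_{\varphi}^{\ast}s\rangle e^{-\varphi}$ exactly as in $\left(\ref{id}\right)$ gives the second, with the same constant $C=\frac{n}{2(n-1)+(n-2)\varepsilon}$.

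I expect no genuine obstacle here; the only points that require a sentence of care are (i) checking that $(D^{\pm})_{\varphi}^{\ast}$, the formal adjoint of $D^{\pm}:L^{2}_{\varphi}(M,\mathbb{S}^{\mp})\to L^{2}_{\varphi}(M,\mathbb{S}^{\pm})$, really is the restriction of $D_{\varphi}^{\ast}$ — this follows because $D^{+}$ and $D^{-}$ are formal adjoints of each other by $\left(\ref{pos-neg-Dirac}\right)$, and conjugating by $e^{\mp\varphi/2}$ respects the grading since $\varphi$ is scalar-valued; and (ii) noting that the Clifford-module structure constants used to derive $\left(\ref{L2-identity}\right)$ and $\left(\ref{grad-and-Ds}\right)$ only involve Clifford multiplication by real tangent vectors, which preserves the decomposition $Cl^{1}(M)\cdot\mathbb{S}^{\mp}\subseteq\mathbb{S}^{\pm}$, so no off-diagonal contributions enter. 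Given these remarks, the proof is "by the same computation as Proposition \ref{weighted-L2-prop}, with $\mathfrak{R}$ replaced by $\mathfrak{R}^{\mp}$ and $D$ by $D^{\pm}$," and I would write it that way rather than reproducing all of $\left(\ref{one}\right)$--$\left(\ref{id}\right)$.
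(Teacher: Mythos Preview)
Your approach is correct and essentially identical to the paper's: one observes that for $s\in\Gamma(\mathbb{S}^{\mp})$ the identities $(D^{\pm})_{\varphi}^{\ast}s=D_{\varphi}^{\ast}s$ and $\mathfrak{R}s=\mathfrak{R}^{\mp}s$ hold, then repeats the computation of Proposition~\ref{weighted-L2-prop} verbatim with $\lambda_{\mathbb{S}}$ replaced by $\lambda_{\mathbb{S}^{\mp}}$. One small slip to fix when you write it up: for $s\in\Gamma(\mathbb{S}^{\mp})$ you have $Ds=D^{\mp}s$, not $D^{\pm}s$, since by \eqref{pos-neg-Dirac} the operator $D^{\pm}$ maps $\Gamma(\mathbb{S}^{\pm})\to\Gamma(\mathbb{S}^{\mp})$; this does not affect the argument because the relevant fact is $Ds=(D^{\pm})^{\ast}s$ on $\mathbb{S}^{\mp}$, which you correctly use.
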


\begin{proof}
Consider the Dirac operator $D^{+}:L_{\varphi }^{2}\left( M,\mathbb{S}%
^{+}\right) \rightarrow L_{\varphi }^{2}\left( M,\mathbb{S}^{-}\right) $
(the $D^{-}$ case is similar). Let $\left( D^{+}\right) _{\varphi }^{\ast }$
be its formal adjoint with respect to the measure $e^{-\varphi }dvol_{g}$.
Then it is easy to observe%
\begin{equation}
\left( D^{+}\right) _{\varphi }^{\ast }s=e^{\varphi }D^{-}\left( e^{-\varphi
}s\right) =e^{\varphi }D\left( e^{-\varphi }s\right) =-\nabla \varphi \cdot
s+D^{-}s  \label{weighted-conjugate}
\end{equation}%
for smooth sections $s$ of $\mathbb{S}^{-}$, where we have used that $%
D^{-}s=D|_{\mathbb{S}^{-}}\left( s\right) $.

When we restrict the sections from $\Gamma \left( \mathbb{S}\right) $ to $%
\Gamma \left( \mathbb{S}^{-}\right) $, by $\left( \ref{pos-neg-Dirac}\right) 
$ and $\left( \ref{curvature-pm}\right) $, $D^{2}$ becomes $D^{+}D^{-}$, and 
$\mathfrak{R}$ becomes $\mathfrak{R}^{-}$ in the Bochner formula $\left( \ref%
{Gen-Bochner}\right) $, i.e. 
\begin{equation*}
D^{+}D^{-}=\nabla ^{\ast }\nabla +\mathfrak{R}^{-}.
\end{equation*}%
Similarly%
\begin{equation*}
D^{-}D^{+}=\nabla ^{\ast }\nabla +\mathfrak{R}^{+}.
\end{equation*}%
Integrating on $M$ we obtain%
\begin{equation}
\int_{M}\left\vert D^{\pm }s\right\vert ^{2}=\int_{M}\left\vert \nabla
s\right\vert ^{2}+\int_{M}\left\langle \mathfrak{R}^{\pm }s,s\right\rangle 
\text{.}  \label{Integral-Bochner}
\end{equation}

The remaining part of the proof is the same as Proposition \ref%
{weighted-L2-prop}, except that $\lambda _{\mathbb{S}}$ is replaced by $%
\lambda _{\mathbb{S}^{-}}$, where \ 
\begin{equation}
\lambda _{\mathbb{S}^{\pm }}\left( x\right) :=\text{the smallest eigenvalue
of }\mathfrak{R}^{\pm }\left( x\right)  \label{lambda-s-pm}
\end{equation}%
at any $x\in M$. The proposition follows.\ \ \ \ \ 
\end{proof}

\begin{remark}
\label{D^pm} From Proposition \ref{weighted-egen-pm}, we know that
Proposition \ref{Dirac-solve} still holds if we replace simultaneously $D$
by $D^{\pm }$ and $\lambda _{\mathbb{S}}$ by $\lambda _{\mathbb{S^{\mp }}}.$
\end{remark}

\bigskip

From Proposition \ref{weighted-egen-pm} and Remark \ref{D^pm}, similarly we
obtain the results of $D^{\pm }$ parallel to Theorem \ref{RieSurfaceCase},
Corollary \ref{eigen-by-curvature-integral} and Theorem \ref%
{cylindrical-solvability}, by replacing $D$ by $D^{\pm }$, and $\lambda _{%
\mathbb{S}}$ by $\lambda _{\mathbb{S}^{\mp }}$ in the corresponding
statements. For example, we can refine Corollary \ref%
{eigen-by-curvature-integral} to Corollary \ref{curvature-integral-D+}. From
this we obtain Corollary \ref{curvature-integral-dbar} as follows.

\medskip

\begin{proof}
(of Corollary \ref{curvature-integral-dbar}). Let $\mathbb{S^{+}=}E$ and $%
\mathbb{S^{-}=\wedge }^{0,1}\left( E\right) $ over the Riemann surface $M$, $%
\overline{\partial }:E\rightarrow $ $\mathbb{\wedge }^{0,1}\left( E\right) $
be the Cauchy-Riemann operator, and $\overline{\partial }^{\ast }$ be its
adjoint. Let $D^{+}=\sqrt{2}\overline{\partial },$ $D^{-}=\sqrt{2}\overline{%
\partial }^{\ast }$, then $D=\left( D^{+},D^{-}\right) $ is the
Dolbeault-Dirac operator on $\mathbb{S=S^{+}\oplus S^{-}}$. From Lemma \ref%
{curve. term Spinc}, we have the curvature operators $\mathfrak{R}^{\pm }$
of $\mathbb{S^{\pm }}$ as%
\begin{equation*}
\mathfrak{R}^{-}=\sqrt{-1}\Lambda R^{E}+K,\ \ \ \ \mathfrak{R}^{+}=-\sqrt{-1}%
\Lambda R^{E}.
\end{equation*}%
By definitions $\left( \ref{thetaE}\right) $ and $\left( \ref{ThetaE}\right) 
$, we have%
\begin{eqnarray*}
\lambda _{\mathbb{S^{+}}} &=&\text{the smallest eigenvalue of }-\sqrt{-1}%
\Lambda R^{E}=-\Theta _{E}, \\
\lambda _{\mathbb{S^{-}}} &=&\text{the smallest eigenvalue of }\sqrt{-1}%
\Lambda R^{E}+K=\theta _{E}+K.
\end{eqnarray*}%
Applying Corollary \ref{curvature-integral-D+} to $D$, we obtain Corollary %
\ref{curvature-integral-dbar}.
\end{proof}

By Corollary \ref{curvature-integral-D+}, we improve the estimate for $%
\lambda _{\mathrm{min}}(D^{2})$ in Corollary \ref%
{eigen-by-curvature-integral} as follows.

\begin{corollary}
\label{eigen-est-surface-Z2}Under the hypothesis in Corollary \ref%
{curvature-integral-D+}, it holds that 
\begin{equation*}
\lambda _{\mathrm{min}}(D^{2})\geq \frac{2}{\mathrm{Vol}(M)}\min \left\{
\int_{M}\lambda _{\mathbb{S^{+}}},\int_{M}\lambda _{\mathbb{S^{-}}}\right\} .
\end{equation*}
\end{corollary}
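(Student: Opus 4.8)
The plan is to deduce Corollary \ref{eigen-est-surface-Z2} directly from the half-Dirac eigenvalue estimate in Corollary \ref{curvature-integral-D+}, using the block decomposition of $D^2$ under the $\mathbb{Z}_2$-grading. Since $\mathbb{S}=\mathbb{S}^+\oplus\mathbb{S}^-$ is a parallel decomposition and $D$ is odd with respect to it, $D^2$ preserves each summand, splitting as $D^2 = (D^-D^+)\oplus(D^+D^-)$ acting on $\Gamma(\mathbb{S}^+)\oplus\Gamma(\mathbb{S}^-)$. Consequently the spectrum of $D^2$ is the union of the spectra of $D^-D^+$ and $D^+D^-$, so
\begin{equation*}
\lambda_{\mathrm{min}}(D^2) = \min\left\{\lambda_{\mathrm{min}}(D^-D^+),\ \lambda_{\mathrm{min}}(D^+D^-)\right\}.
\end{equation*}

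Next I would apply the estimate \eqref{eigen-est-D+-surface} from Corollary \ref{curvature-integral-D+} to each factor. Taking the upper sign gives $\lambda_{\mathrm{min}}(D^+D^-)\geq \frac{2}{\mathrm{Vol}(M)}\int_M \lambda_{\mathbb{S}^-}$, and taking the lower sign gives $\lambda_{\mathrm{min}}(D^-D^+)\geq \frac{2}{\mathrm{Vol}(M)}\int_M \lambda_{\mathbb{S}^+}$. Combining these two bounds with the displayed identity for $\lambda_{\mathrm{min}}(D^2)$ yields
\begin{equation*}
\lambda_{\mathrm{min}}(D^2) \geq \frac{2}{\mathrm{Vol}(M)}\min\left\{\int_M\lambda_{\mathbb{S}^+},\ \int_M\lambda_{\mathbb{S}^-}\right\},
\end{equation*}
which is exactly the claim. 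The improvement over Corollary \ref{eigen-by-curvature-integral} comes from the fact that $\lambda_{\mathbb{S}} = \min\{\lambda_{\mathbb{S}^+},\lambda_{\mathbb{S}^-\}$ pointwise (since $\mathfrak{R}$ is block-diagonal by \eqref{curvature-pm}), so $\int_M\lambda_{\mathbb{S}}\leq\min\{\int_M\lambda_{\mathbb{S}^+},\int_M\lambda_{\mathbb{S}^-}\}$, and the latter is what we obtain here.

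The only point requiring a small amount of care — and the one I expect to be the main (minor) obstacle — is justifying the spectral decomposition $\mathrm{spec}(D^2)=\mathrm{spec}(D^-D^+)\cup\mathrm{spec}(D^+D^-)$ rigorously at the level of eigenvalues rather than just formally. On a closed manifold this is standard: $D$ is essentially self-adjoint with discrete spectrum, the grading operator commutes with $D^2$, and an $L^2$-eigensection of $D^2$ decomposes into its $\mathbb{S}^\pm$-components, each of which is again an eigensection of $D^2$ with the same eigenvalue lying in the appropriate block. One should also note that $D^+$ and $D^-$ being formal adjoints of each other means $D^-D^+$ and $D^+D^-$ are nonnegative self-adjoint operators, so their first eigenvalues are well-defined, and that a nonzero eigenvalue $\mu$ of $D^-D^+$ is automatically a nonzero eigenvalue of $D^+D^-$ (via $s\mapsto D^+s$) and conversely — though for the inequality we only need the one-sided statement that each block's bottom eigenvalue is bounded below as above. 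With this observation recorded, the corollary follows immediately.
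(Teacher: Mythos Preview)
Your proposal is correct and follows exactly the route the paper intends: the paper states the corollary immediately after Corollary~\ref{curvature-integral-D+} with the remark that it follows from that result, and your argument---splitting $D^2=(D^-D^+)\oplus(D^+D^-)$ via the $\mathbb{Z}_2$-grading so that $\lambda_{\mathrm{min}}(D^2)=\min\{\lambda_{\mathrm{min}}(D^-D^+),\lambda_{\mathrm{min}}(D^+D^-)\}$, then applying \eqref{eigen-est-D+-surface} to each block---is precisely that deduction made explicit. Your closing remark explaining why this improves Corollary~\ref{eigen-by-curvature-integral} (via the pointwise identity $\lambda_{\mathbb{S}}=\min\{\lambda_{\mathbb{S}^+},\lambda_{\mathbb{S}^-}\}$) is also correct and matches the paper's framing.
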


\bigskip From Proposition \ref{weighted-egen-pm}, we can also improve
Theorem \ref{first-eigen-est} of B\"{a}r. As in the proof of Theorem \ref%
{first-eigen-est}, let $s\in \Gamma (M,\mathbb{S})$ be a non-trivial section
such that $D(e^{-a\varphi }s)=\lambda e^{-a\varphi }s$, which implies 
\begin{equation*}
(D^{\pm })_{a\varphi }^{\ast }s^{\mp }=\lambda s^{\pm }.
\end{equation*}%
By applying Proposition \ref{weighted-egen-pm} to $s^{\mp }$ in the same way
as in the proof of Theorem \ref{first-eigen-est}, we have the following

\begin{corollary}
\label{Eigen-improve-Z2_Dirac}Let $\mathbb{S}$ be a $\mathbb{Z}_{2}$-graded
Dirac bundle over a compact $n$-dimensional Riemannian manifold $\left(
M,g\right) $ without boundary, and $D$ be the Dirac operator, $n\geq 2$.
Then 
\begin{equation*}
\lambda _{\mathrm{min}}(D^{2})\geq \frac{n}{n-1}\min \left\{ \lambda _{%
\mathrm{min}}(\mathbb{L^{+}}),\lambda _{\mathrm{min}}(\mathbb{L^{-}})\right\}
\end{equation*}%
where $\mathbb{L^{\pm }}=-\frac{n-1}{n-2}\Delta +\lambda _{\mathbb{S^{\pm }}%
} $ if $n\geq 3$, and $\mathbb{L^{\pm }}=-\frac{1}{2}\Delta +\lambda _{%
\mathbb{S^{\pm }}}$ if $n=2$.
\end{corollary}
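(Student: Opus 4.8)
The plan is to mimic the proof of Theorem \ref{first-eigen-est} but split the Dirac operator according to the $\mathbb{Z}_2$-grading and apply Proposition \ref{weighted-egen-pm} to each summand separately. First I would pick a square root $\lambda$ of $\lambda_{\mathrm{min}}(D^2)$ and a non-trivial section $s\in\Gamma(M,\mathbb{S})$ with $D(e^{-a\varphi}s)=\lambda e^{-a\varphi}s$, where $a=0$ if $n=2$ and $a=\frac{n}{2(n-1)}$ if $n\geq 3$, exactly as in the proof of Theorem \ref{first-eigen-est}. Writing $s=s^++s^-$ with $s^\pm\in\Gamma(M,\mathbb{S}^\pm)$, the off-diagonal shape \eqref{pos-neg-Dirac} of $D$ forces the eigenvalue equation to decouple into $(D^\pm)_{a\varphi}^\ast s^\mp=\lambda s^\pm$, since $(D^\pm)_{a\varphi}^\ast=e^{a\varphi}D^{\mp}(e^{-a\varphi}\cdot)$ by the argument in \eqref{weighted-conjugate}.

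Next I would feed $s^\mp$ into the first displayed inequality of Proposition \ref{weighted-egen-pm}, using $(D^\pm)_{\varphi}^\ast s^\mp=(D^\pm)_{a\varphi}^\ast s^\mp+(a-1)\nabla\varphi\cdot s^\mp=\lambda s^\pm+(a-1)\nabla\varphi\cdot s^\mp$ (via Lemma \ref{weight-relation}) together with $\mathrm{Re}\langle s^\mp,\nabla\varphi\cdot s^\mp\rangle=0$. This is the same bookkeeping as in the passage producing \eqref{n = 2} and \eqref{n geq 3}, and it yields, for each choice of sign,
\begin{equation*}
\lambda^2\int_M|s^\mp|^2e^{-\varphi}\geq \frac{n}{n-1}\int_M\Bigl(\tfrac{1}{2}\Delta\varphi-\tfrac{n-2}{4(n-1)}|\nabla\varphi|^2+\lambda_{\mathbb{S}^\mp}\Bigr)|s^\mp|^2e^{-\varphi}
\end{equation*}
when $n\geq 3$, and the analogous inequality with $\Delta\varphi-|\nabla\varphi|^2+2\lambda_{\mathbb{S}^\mp}$ (and factor $1$ in front) when $n=2$. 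Now I would choose, as in Theorem \ref{first-eigen-est}, weight functions built from the eigenfunctions of $\mathbb{L}^+$ and $\mathbb{L}^-$: let $\psi^\pm>0$ solve $\mathbb{L}^\pm\psi^\pm=\lambda_{\mathrm{min}}(\mathbb{L}^\pm)\psi^\pm$ and set $\varphi^\pm=-\log\psi^\pm$ if $n=2$, $\varphi^\pm=-\frac{2(n-1)}{n-2}\log\psi^\pm$ if $n\geq 3$. With the weight $\varphi^-$ the right-hand side of the displayed inequality collapses (pointwise) to $\frac{n}{n-1}\lambda_{\mathrm{min}}(\mathbb{L}^-)\int_M|s^-|^2e^{-\varphi^-}$, so whenever $s^-\not\equiv 0$ we get $\lambda^2\geq\frac{n}{n-1}\lambda_{\mathrm{min}}(\mathbb{L}^-)$; symmetrically, using $\varphi^+$ and $s^+$ gives $\lambda^2\geq\frac{n}{n-1}\lambda_{\mathrm{min}}(\mathbb{L}^+)$ whenever $s^+\not\equiv 0$. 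In either case $\lambda^2\geq\frac{n}{n-1}\min\{\lambda_{\mathrm{min}}(\mathbb{L}^+),\lambda_{\mathrm{min}}(\mathbb{L}^-)\}$, which is the claim.

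The one point that needs care — and which I expect to be the main obstacle — is that a priori the eigensection $s$ might have $s^+\equiv 0$ or $s^-\equiv 0$, so I cannot simultaneously use both $\varphi^+$ and $\varphi^-$ on the same $s$; I must argue that at least one of $s^+,s^-$ is non-trivial (which is automatic since $s\not\equiv 0$) and that the corresponding inequality still applies. Since $D^2$ preserves the grading, one may in fact choose the eigensection for $\lambda_{\mathrm{min}}(D^2)$ to lie entirely in $\Gamma(\mathbb{S}^+)$ or entirely in $\Gamma(\mathbb{S}^-)$; in the first case $s^-=\lambda^{-1}D^+s^+$ and in the second $s^+=\lambda^{-1}D^-s^-$, and if $\lambda=0$ the estimate is trivial because $\lambda_{\mathrm{min}}(\mathbb{L}^\pm)$ can be bounded above by $0$ using the constant test function against $-\tfrac{n-1}{n-2}\Delta$ only when $\min_M\lambda_{\mathbb{S}^\pm}\le 0$ — more simply, if $\lambda=0$ then $\mathrm{ker}\,D\neq 0$ forces, by the Bochner identity \eqref{Integral-Bochner}, $\int_M\langle\mathfrak{R}^\mp s^\mp,s^\mp\rangle\le 0$ for the non-trivial component, whence $\lambda_{\mathrm{min}}(\mathbb{L}^\mp)\le 0$ by testing $\mathbb{L}^\mp$ against $|s^\mp|$ via Kato's inequality, so the inequality holds trivially. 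Writing this degenerate case cleanly, and making sure the weighted identity of Proposition \ref{weighted-egen-pm} is legitimately applied to the possibly-not-compactly-supported eigensection on the closed manifold $M$ (no boundary terms, $\varphi^\pm$ smooth), are the only places requiring attention; everything else is a transcription of the proof of Theorem \ref{first-eigen-est} with $\lambda_{\mathbb{S}}$ replaced by $\lambda_{\mathbb{S}^\mp}$.
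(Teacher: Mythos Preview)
Your plan tracks the paper's one-line sketch, but the displayed inequality you claim is not what the bookkeeping actually produces. You correctly write $(D^{+})_{\varphi}^{\ast}s^{-}=\lambda s^{+}+(a-1)\nabla\varphi\cdot s^{-}$, but then the term $\frac{n-1}{n}\bigl|(D^{+})_{\varphi}^{\ast}s^{-}\bigr|^{2}$ on the left of Proposition~\ref{weighted-egen-pm} contributes $\frac{n-1}{n}\lambda^{2}|s^{+}|^{2}$, not $\frac{n-1}{n}\lambda^{2}|s^{-}|^{2}$. Moreover the vanishing you invoke, $\mathrm{Re}\langle s^{-},\nabla\varphi\cdot s^{-}\rangle=0$, never enters: the cross terms that actually appear are $\mathrm{Re}\langle s^{+},\nabla\varphi\cdot s^{-}\rangle$, and these have no reason to vanish individually (for $n\ge 3$ their total coefficient happens to be zero only because of the particular value $a=\tfrac{n}{2(n-1)}$; for $n=2$ they survive). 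Carrying the algebra through correctly one obtains
\[
\lambda^{2}\int_{M}|s^{\pm}|^{2}e^{-\varphi}\ \ge\ \frac{n}{n-1}\int_{M}\Bigl(\tfrac{1}{2}\Delta\varphi-\tfrac{n-2}{4(n-1)}|\nabla\varphi|^{2}+\lambda_{\mathbb{S}^{\mp}}\Bigr)|s^{\mp}|^{2}e^{-\varphi},
\]
with \emph{opposite} components on the two sides. This is precisely where the graded case diverges from Theorem~\ref{first-eigen-est}: there $D_{\varphi}^{\ast}s=\lambda s+\cdots$ keeps $|s|^{2}$ on both sides and one may divide through; here the grading swaps the component.

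Because of this mismatch, choosing $\varphi=\varphi^{-}$ does \emph{not} yield $\lambda^{2}\ge\tfrac{n}{n-1}\lambda_{\min}(\mathbb{L}^{-})$ without an additional comparison of $\int|s^{+}|^{2}e^{-\varphi^{-}}$ against $\int|s^{-}|^{2}e^{-\varphi^{-}}$, which you have not supplied. Summing the two graded inequalities with a common weight $\varphi$ only puts $B_{+}|s^{+}|^{2}+B_{-}|s^{-}|^{2}$ on the right, and bounding this below by $\min(B_{+},B_{-})|s|^{2}$ just reproduces Theorem~\ref{first-eigen-est} (since pointwise $\min(B_{+},B_{-})$ involves $\lambda_{\mathbb{S}}=\min(\lambda_{\mathbb{S}^{+}},\lambda_{\mathbb{S}^{-}})$), not the claimed improvement. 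The paper's own argument is only the sentence preceding the statement and does not spell this step out either; as written, your proposal has a real gap exactly here, and your discussion of the degenerate cases $s^{\pm}\equiv 0$ or $\lambda=0$ does not address it.
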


\section{Manifolds with cylindrical ends\label{cyl}}

\bigskip Between the compact and noncompact cases, there is an important
case of manifolds with cylindrical ends. There are many works of
differential operators on such manifolds, going back to the work of Lockhart
and McOwen (\cite{LoMa}), and occurring often in gauge theory and low
dimensional topology (e.g. \cite{Don}, \cite{T}).

\begin{definition}
\label{cylindrical}Let $\left( M,g\right) $ be a $n$-dimensional Riemannian
manifold. We say it is a \emph{manifold with cylindrical ends} if outside a
compact subset, $M$ consists of product Riemannian manifolds $E_{\nu }$ $%
\simeq \lbrack 0,+\infty )\times B_{\nu }$ ($\nu =1,2,\cdots m$), where each 
$B_{\nu }$ is a $\left( n-1\right) $-dimensional compact Riemannian
manifold. Each $E_{\nu }$ is called a \emph{cylindrical end}, and the $%
[0,+\infty )$ direction is called the \emph{cylindrical direction}.
\end{definition}

\bigskip For analysis on cylindrical manifolds, one often needs the \emph{%
Sobolev space with} \emph{exponential weights}. In other words, one chooses
a smooth weight function $\varphi $ on $M$ such that 
\begin{equation}
\varphi |_{E_{\nu }}=-\delta _{\nu }\tau _{\nu }\text{ for some constant }%
\delta _{\nu }\geq 0\text{, and large }\tau _{\nu }\text{ }
\label{exp-weight}
\end{equation}%
$\left( \nu =1,2,\cdots m\right) $, and then defines the \emph{weighted
Sobolev norm} 
\begin{equation}
\left\Vert f\right\Vert _{W_{\delta }^{k,p}}:=\left\Vert e^{-\frac{\varphi }{%
p}}f\right\Vert _{W^{k,p}},  \label{weighted-Sobolev}
\end{equation}%
where $\delta =\left( \delta _{1},\cdots ,\delta _{m}\right) $. Different
choices of $\varphi $ satisfying $\left( \ref{exp-weight}\right) $ only
result in equivalent Banach spaces.

\bigskip

\begin{proof}
(of Theorem \ref{cylindrical-solvability}). Without loss of generality we
assume $M\backslash K=\cup _{\nu =1}^{m}E_{\nu }$, where $E_{\nu }$ are the
cylindrical ends of $M$, (otherwise we can always enlarge $K)$. We choose a
smooth cut-off function $\rho :M\rightarrow \left[ 0,1\right] $ such that $%
\rho \equiv 0$ on $K$, and on $E_{\nu }$ $\rho $ is a function of\ the
variable $\tau _{\nu \text{ }}$in the cylindrical direction satisfying 
\begin{equation}
\rho \left( \tau _{\nu}\right) =\left\{ 
\begin{array}{c}
0\text{, if }0\leq \tau _{\nu }\leq 1 \\ 
1\text{, if }\tau _{\nu }\geq 2%
\end{array}%
\right. \text{, and }0\leq \rho ^{\prime }\left( \tau _{\nu }\right) \text{, 
}\left\vert \rho ^{\prime \prime }\left( \tau _{\nu }\right) \right\vert
\leq 2\text{.}  \notag
\end{equation}%
Let $\mu >0$ be the first eigenvalue of the Dirichlet eigenvalue problem 
\begin{equation}
-\Delta \eta =\mu \eta \text{ on }M\backslash \cup _{\nu =1}^{m}\left(
3,\infty \right) \times B_{\nu }.  \label{eigen-equ}
\end{equation}%
By the nodal domain theorem, we can find an eigenfunction function $\eta >0$
on $M\backslash \cup _{\nu =1}^{m}\left( 3,\infty \right) \times B_{\nu }$.
Setting $A=\frac{1}{\left( 1-\frac{2}{n}\right) \left( 1+\frac{1}{%
\varepsilon }\right) }>0$, from $\left( \ref{eigen-equ}\right) $ it follows
that for sufficiently large $\varepsilon >0$%
\begin{eqnarray}
\left( -\frac{1}{\left( 1-\frac{2}{n}\right) \left( 1+\frac{1}{\varepsilon }%
\right) }\Delta +2\lambda _{\mathbb{S}}\right) \eta ^{\gamma } &=&\eta
^{\gamma -2}\left[ -A\gamma \eta \Delta \eta -A\gamma \left( \gamma
-1\right) \left\vert \nabla \eta \right\vert ^{2}+2\lambda _{\mathbb{S}}\eta
^{2}\right]  \notag \\
&\geq &\eta ^{\gamma -1}\left[ -A\gamma \Delta \eta +2\lambda _{\mathbb{S}%
}\eta \right]  \notag \\
&\geq &\eta ^{\gamma }\left[ A\gamma \mu -2\beta \right] >0
\label{eigen-pos}
\end{eqnarray}
on $M\backslash \cup _{\nu =1}^{m}[2,\infty )\times B_{\nu }$, provided that 
$\beta $ satisfies the condition%
\begin{equation*}
0<\beta <\frac{\gamma \mu }{2-\frac{4}{n}},
\end{equation*}%
where $\gamma \in (0,1)$ is a constant to be determined. Let 
\begin{equation*}
\phi :=-\frac{1}{\left( 1-\frac{2}{n}\right) \left( 1+\frac{1}{\varepsilon }%
\right) }\log \eta ,
\end{equation*}%
then on $M\backslash \cup _{\nu =1}^{m}[2,\infty )\times B_{\nu }$, by $%
\left( \ref{eigen-pos}\right) $ we have 
\begin{eqnarray}
&&\Delta \left( \gamma \phi \right) -\left( 1-\frac{2}{n}\right) \left( 1+%
\frac{1}{\varepsilon }\right) \left\vert \nabla \left( \gamma \phi \right)
\right\vert ^{2}+2\lambda _{\mathbb{S}}  \notag \\
&=&\eta ^{-\gamma }\left[ -\frac{1}{\left( 1-\frac{2}{n}\right) \left( 1+%
\frac{1}{\varepsilon }\right) }\Delta \eta ^{\gamma }+2\lambda _{\mathbb{S}%
}\eta ^{\gamma }\right] >0.  \label{gama-fi}
\end{eqnarray}

We define a function $h:\cup _{\nu =1}^{m}E_{\nu }\rightarrow \mathbb{R}$,
such that 
\begin{equation}
h\left( \tau _{\nu },b_{\nu }\right) =-\delta _{\nu }\tau _{\nu },  \label{h}
\end{equation}%
where the constants $\delta _{\nu }\geq 0$ are to be determined. Then we
define the weight function $\varphi :M\rightarrow \mathbb{R}$ as%
\begin{equation}
\varphi =\gamma \left( 1-\rho \right) \phi +\rho h.  \label{b}
\end{equation}%
It is easy to check that $\varphi $ is smooth and is globally defined on $M$.

By $\left( \ref{gama-fi}\right) \sim \left( \ref{b}\right) $, for
sufficiently small $\delta _{1},\cdots ,\delta _{m}\geq 0$ and sufficiently
large $\varepsilon >0$ , we have 
\begin{eqnarray*}
&&\Delta \varphi -\left( 1-\frac{2}{n}\right) \left( 1+\frac{1}{\varepsilon }%
\right) \left\vert \nabla \varphi \right\vert ^{2}+2\lambda _{\mathbb{S}} \\
&&\left\{ 
\begin{array}{c}
\text{ }>0\text{ \ \ \ \ \ \ \ \ \ \ \ \ \ \ \ \ \ \ \ \ \ \ \ \ \ \ \ \ \ \
\ on }K\cup \cup _{\nu =1}^{m}\left[ 0,1\right] \times B_{\nu }, \\ 
=-\left( 1-\frac{2}{n}\right) \left( 1+\frac{1}{\varepsilon }\right) \delta
_{\nu }^{2}+2\lambda _{\mathbb{S}}>0\text{ on}\cup _{\nu =1}^{m}[2,\infty
)\times B_{\nu }.%
\end{array}%
\right.
\end{eqnarray*}%
On each $\left[ 1,2\right] \times B_{\nu }$, if $\gamma $ and $\delta
_{1},\cdots ,\delta _{m}$ are sufficiently small, then%
\begin{eqnarray}
&&\Delta \varphi -\left( 1-\frac{2}{n}\right) \left( 1+\frac{1}{\varepsilon }%
\right) \left\vert \nabla \varphi \right\vert ^{2}+2\lambda _{\mathbb{S}} 
\notag \\
&\geq &-C_{1}\left( \gamma +\left\vert h\right\vert +\left\vert \nabla
h\right\vert \right) +2\lambda _{\mathbb{S}}\geq \alpha ,  \label{l-est}
\end{eqnarray}%
where $C_{1}>0$ is some constant depending on $\phi $ and $\rho $. So we
have 
\begin{equation}
\Delta \varphi -\left( 1-\frac{2}{n}\right) \left( 1+\frac{1}{\varepsilon }%
\right) \left\vert \nabla \phi \right\vert ^{2}+2\lambda _{\mathbb{S}}\geq
\alpha _{1}\text{ on }M  \label{positive-b}
\end{equation}%
for some constant $\alpha _{1}>0$.

Therefore by Proposition \ref{Dirac-solve} and $\left( \ref{positive-b}%
\right) $, for any $f\in L_{\varphi }^{2}\left( M,\mathbb{S}\right) $, there
exists $u\in L_{\varphi }^{2}\left( M,\mathbb{S}\right) $ such that $Du=f$
and%
\begin{equation*}
\left\Vert u\right\Vert _{\varphi }^{2}\leq C_{2}\int_{M}\frac{\left\vert
f\right\vert ^{2}}{\Delta \varphi -\left( 1-\frac{2}{n}\right) \left( 1+%
\frac{1}{\varepsilon }\right) \left\vert \nabla \phi \right\vert
^{2}+2\lambda _{\mathbb{S}}}e^{-\varphi }\leq C\int_{M}\left\vert
f\right\vert ^{2}e^{-\varphi }
\end{equation*}%
for some constants $C_{2}$ and $C$. By elliptic regularity of $D$ and the
cylindrical structure on $M$, we have $\left\Vert u\right\Vert _{\varphi
}+\left\Vert \nabla u\right\Vert _{\varphi }\leq C\left\Vert f\right\Vert
_{\varphi }$, so $\left( \ref{weighted-sol}\right) $ is proved.

With the $L^{2}$-estimate $\left( \ref{weighted-sol}\right) $ ($\delta =0$
case) on the cylindrical manifold $M$, it is standard to derive the $L^{p}$
estimate $\left( \ref{weighted-Lp-cyl}\right) $ (c.f. Section 3.4 \cite{Don}%
). The theorem is proved.
\end{proof}


{\small Addresses:}

{\small Qingchun Ji}

{\small School of Mathematics, Fudan University, Shanghai 200433, China }

{\small Email: qingchunji@fudan.edu.cn}

\bigskip

{\small Ke Zhu}

{\small Department of Mathematics, Harvard University, Cambridge, MA 02138}

{\small Email: kzhu@math.harvard.edu}

{\small Current:}

{\small Department of Mathematics and Statistics }

{\small Minnesota State University Mankato }

{\small Mankato, MN 56001 }

{\small Email: ke.zhu@mnsu.edu}

\end{document}